\def\eps{\varepsilon}
\newcommand{\R}{\mathbb{R}}
\newcommand{\N}{\mathbb{N}}
\newcommand{\M}{\mathscr{M}}
\newcommand{\Id}{\mathrm{Id}}
\newcommand{\be}{\begin{equation}}
\newcommand{\ee}{\end{equation}}
\newcommand{\beq}{\begin{equation}}
\newcommand{\eeq}{\end{equation}}
\newcommand{\clf}{common Lyapunov function}
\newtheorem{theorem}{Theorem}[section]
\newtheorem{corollary}[theorem]{Corollary}
\newtheorem{proposition}[theorem]{Proposition}
\newtheorem{lemma}[theorem]{Lemma}
\newtheorem{definition}[theorem]{Definition}
\newtheorem{remark}[theorem]{Remark}
\newtheorem{example}[theorem]{Example}
\title{On universal classes of Lyapunov functions for linear switched systems\thanks{This research was partially supported by  the iCODE institute, research project of the Idex Paris-Saclay.}} 
\author{Paolo Mason\thanks{Universit\'e Paris-Saclay, CNRS, CentraleSup\'elec,  Laboratoire des signaux et syst\`emes, 91190, Gif-sur-Yvette, France, {\tt paolo.mason@centralesupelec.fr}}, Yacine Chitour\thanks{Universit\'e Paris-Saclay, CNRS, CentraleSup\'elec,  Laboratoire des signaux et syst\`emes, 91190, Gif-sur-Yvette, France, {\tt yacine.chitour@centralesupelec.fr}}, and Mario Sigalotti\thanks{Laboratoire Jacques-Louis Lions, CNRS, Inria, Sorbonne Universit\'e, Universit\'e de Paris, France, {\tt mario.sigalotti@inria.fr}}}
\begin{document}

\maketitle

\begin{abstract}
In this paper we discuss the notion of universality for classes of candidate common Lyapunov functions for linear switched systems. 
On the one hand, we prove that a family of absolutely homogeneous functions is universal as soon as it approximates arbitrarily well every convex absolutely homogeneous function for the $C^0$ topology of the unit sphere. On the other hand, we prove several obstructions for a class to be universal, showing, in particular,
 that families of piecewise-polynomial continuous functions whose construction involves at most $l$ polynomials of degree at most $m$ (for given positive integers $l,m$) cannot be universal.
\end{abstract}


\section{Introduction}

Common Lyapunov functions constitute the most popular and powerful tool for the stability analysis of switched systems. 
Roughly speaking, the use of common Lyapunov functions for stability analysis gathers the global behavior of the system and allows to bypass the explicit analysis of single trajectories, which may be extremely complex.
Yet, looking for a common Lyapunov function may be a nontrivial task as stability cannot always be checked by means of Lyapunov functions in a simple form, for instance within the class of quadratic forms. 
Given a family of systems (e.g., the family of all linear switched systems), classes of functions large enough to include a Lyapunov function for each globally asymptotically stable system are called \emph{universal}~\cite{blanchini} and a result establishing the existence of such a class is  called a \emph{converse Lyapunov theorem}.
The literature dealing with converse Lyapunov theorems, starting from the works by Massera and Kurzweil in the 1950s (see e.g.~\cite{massera,kurzweil,wilson,sontag-converse,hante})  is quite rich. The results concerning the existence of smooth Lyapunov functions for nonlinear systems with global asymptotic stability properties require the development of rather sophisticated techniques.  Concerning robust asymptotic stability 
with respect to 
a closed invariant set in presence of perturbation terms, converse Lyapunov theorems have been derived in~\cite{sontag-converse}. In the context of switched systems (even in a nonlinear setting) such results establish the equivalence between the global uniform asymptotic stability and the existence of a smooth Lyapunov function. For linear switched systems the construction of a Lyapunov function is much more direct and natural due, essentially, to the homogeneous nature of the system and the equivalence between asymptotic and exponential stability (see e.g.~\cite{DM}). Furthermore, in the linear case and even for the more general class of uncertain systems, it is well-known that the families of piecewise quadratic functions, polyhedral functions, and homogeneous polynomials are universal~\cite{molch1,molch2,molch-scl,blanchini}. On the other hand, for every positive integer $m$, the family of polynomials of degree less or equal than $m$ is not universal even for the simple class of two-dimensional linear switched systems with two modes~\cite{bcm}. Similarly, it is well accepted in the research community (although, to the authors' knowledge, no explicit proof is available)  that families of piecewise quadratic and polyhedral functions whose construction involves a uniformly bounded number of quadratic or linear functions cannot be universal. For this reason, all numerical methods  investigating the existence of Lyapunov functions within these classes are affected by a certain degree of conservativeness. 

The contribution of this paper is twofold. First, we provide a general sufficient condition for a class of functions to be universal (Proposition~\ref{prop-approx}), which is a formalization of fundamental ideas already present in \cite{molch1,molch2,molch-scl,blanchini}.  As a corollary, we recover the universal classes of functions obtained in these references. 
We next derive the main results of this  paper, which provide some necessary conditions for the universality of classes of functions. The first one, Theorem~\ref{no-universal}, is an abstract result which applies to families of real-valued  functions that are  analytic  outside
the origin. The fact that polynomials with a uniform bound $m$ on their degree do not form a universal class~\cite{bcm} follows as a simple consequence of this result. Finally, Theorem~\ref{prop-suivante} states that families of piecewise-polynomial continuous functions whose construction involves at most $l$ polynomials of degree at most $m$ (for given positive integers $l,m$) cannot be universal.

\section{Universal classes of common Lyapunov functions}\label{s:converse-lyapunov}

We consider linear switched systems of the form 
\[
\dot x(t)=A(t)x(t),\qquad t\geq 0,\quad x\in\R^n,\eqno{(\Sigma_{\M})}
\]
where the switching law $A$ is an arbitrary function  belonging  to the space 
$L^\infty(\mathbb{R}_+,\M)$
of measurable functions 
from $\R_+=[0,+\infty)$ to
a bounded subset  $\M$ of the set of $n\times n$ matrices, denoted by $M_n(\R)$.
We use $\Phi_A(t,s)$ to denote the fundamental matrix from $s$ to $t$ for $(\Sigma_\M)$ associated with the switching law $A$ so that every solution of $(\Sigma_\M)$ can be written as $x(t) = \Phi_A(t,0)x(0)$.
Notice that $\Phi_A(t,s)$ exists for every $A\in L^\infty(\mathbb{R}_+,\M)$ and every $s,t\ge 0$ 
(see e.g. \cite[Theorem~3.7]{BressanPiccoli}).
We are interested in the following  uniform stability properties.
\begin{definition}\label{def:stab}
The switched system $(\Sigma_\M)$ is said to be
\begin{itemize}
\item \emph{uniformly stable} 
if there exists $C>0$ such that, for every 
switching law $A$ and $t\geq 0$, $\|\Phi_A(t,0)\|\le C $;
\item \emph{uniformly exponentially stable}
if there exist $C,\gamma>0$ such that, for every switching law 
$A$ and $t\geq 0$, $\|\Phi_A(t,0)\|\le C e^{-\gamma t}$.
\end{itemize}
\end{definition}
Stability in the previous senses may be assessed through common Lyapunov functions, defined below.

\begin{definition} \label{d-CLF} 
We say that a continuous function $V:\R^n\longrightarrow \R_+$ is a \emph{nonstrict
common Lyapunov function} 
for $(\Sigma_{\M})$ if it is \emph{positive definite}, that is, $V(0)=0$ and $V(x)>0$ for every $x\neq 0$, and $V$  is non-increasing along each trajectory of $(\Sigma_{\M})$. If, moreover, $V$ is strictly decreasing along each nonzero trajectory of $(\Sigma_{\M})$, we say that $V$ is a \emph{common Lyapunov function} for $(\Sigma_{\M})$.
\end{definition}

\begin{remark}\label{rem:riscala}
If $V$ is a (possibly nonstrict) common Lyapunov function and $\varphi:\R_+\to\R_+$ is continuous, strictly increasing, and such that $\varphi(0)=0$,
then $\varphi\circ V$ is also a (nonstrict) common Lyapunov function. In particular, 
the positive multiple of a common Lyapunov function
is a common Lyapunov function and if there exists an absolutely homogeneous common Lyapunov function\footnote{Given $\alpha>0$,  a function $V:\R^n\to \R$ is said to be \emph{absolutely homogeneous of degree $\alpha$} if $V(\lambda x)=|\lambda|^\alpha V(x)$ for every $x\in \R^n$ and $\lambda\in \R$. If $\alpha= 1$ we simply say that $V$ is \emph{absolutely homogeneous}.},
then for every $\alpha>0$ there exists an absolutely homogeneous common Lyapunov function of degree $\alpha$.
\end{remark}

We state here 
the classical direct Lyapunov theorem in the  
linear switched case   (see, e.g.,  \cite[Theorem 2.1]{liberzon-book} or \cite[Theorem 4.2]{hafstein} for a formulation involving merely continuous Lyapunov functions).
\begin{theorem}\label{th:Lyap-direct}
A linear switched system $(\Sigma_{\M})$ admitting a nonstrict \clf\ is uniformly stable. If there exists a strict common Lyapunov  function for $(\Sigma_\M)$, then the latter is uniformly exponentially stable.
\end{theorem}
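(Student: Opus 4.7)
The plan is to handle the two statements separately, using in both cases the linearity of $(\Sigma_\M)$ to reduce to initial conditions on the unit sphere $S := \{y \in \R^n : \|y\| = 1\}$.

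For uniform stability from a nonstrict \clf, the key is a rescaling argument. Since $V$ is continuous and positive definite, $m := \min_{y \in S} V(y) > 0$; continuity at the origin gives $\delta > 0$ with $V(y) < m$ whenever $\|y\| \le \delta$. Set $C := 1/\delta$. Given a trajectory $x(\cdot)$ with $\|x(0)\| = 1$ and any switching law $A$, the rescaled curve $y(s) := x(s)/C$ is again a trajectory of $(\Sigma_\M)$ by linearity, satisfying $\|y(0)\| = \delta$ and hence $V(y(0)) < m$. The nonincrease of $V$ along $y$ then forces $V(y(s)) < m$ for all $s \ge 0$, so $y(s) \notin S$; since $\|y(0)\| < 1$ and $s \mapsto \|y(s)\|$ is continuous, $\|y(s)\| < 1$ for every $s$, i.e., $\|x(s)\| < C$, yielding the required uniform bound.

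For uniform exponential stability from a strict \clf, the previous argument already gives the uniform bound $\|\Phi_A(t,0)\| \le K$. Setting $\psi(t) := \sup_A \|\Phi_A(t,0)\|$ and observing that a time-shift of an admissible switching law is again admissible, the cocycle identity yields submultiplicativity $\psi(t+s) \le \psi(t)\psi(s)$, so that uniform exponential stability is equivalent to the existence of some $T > 0$ with $\psi(T) < 1$. I would then show that every trajectory converges to the origin: $V(\Phi_A(t,0) x_0)$ is nonincreasing and bounded, hence converges to some $L \ge 0$, and a LaSalle-type argument---using the weak-$*$ compactness of the bounded family $\Si_{\rm arb}(\M)$, continuous dependence of linear ODE solutions on their time-varying coefficients, and the shift-invariance of admissible switching laws---rules out $L > 0$, since otherwise one could extract from the time-shifted data a limit trajectory along which $V$ is constant, contradicting the strict-decrease hypothesis. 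A further compactness argument then upgrades pointwise convergence to uniform convergence over $S \times \Si_{\rm arb}(\M)$, producing a time $T$ with $\psi(T) \le 1/2$; submultiplicativity delivers the desired exponential estimate.

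The delicate step is the LaSalle-type extraction yielding uniform asymptotic stability. The subtle point is that $V$ is not assumed absolutely homogeneous, so a one-step contraction estimate of the form $V(\Phi_A(T,0)x) \le \mu V(x)$ (which compactness readily provides on $S$) cannot be iterated directly at the level of $V$, because $\Phi_A(T,0)x$ leaves $S$. This is why the argument is structured so that strict decrease of $V$ is first converted into attractivity of the origin for every trajectory, and only then transferred to an operator-norm contraction via the submultiplicativity of $\psi$; the homogeneity needed to close the loop is that of the linear flow itself, not of $V$.
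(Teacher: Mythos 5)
The paper gives no proof of this statement --- it is quoted as the classical direct Lyapunov theorem --- so your proposal can only be judged on its own terms. The first half (nonstrict common Lyapunov function implies uniform stability) is correct and complete: linearity lets you shrink a unit initial condition into the sublevel set $\{V<m\}$, and the nonincrease of $V$ then traps the rescaled trajectory inside the unit ball. The reduction of uniform exponential stability to finding $T$ with $\psi(T)<1$, via submultiplicativity of $\psi$ and the uniform bound from the first part, is also sound.

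The gap is the LaSalle-type extraction, and it is not merely a technicality. The family $\Si_{\rm arb}(\M)$ is not weak-$*$ closed: a weak-$*$ limit of switching laws valued in $\M$ is in general valued only in $\overline{\mathrm{co}}\,\M$, so the limit curve you extract is a trajectory of the relaxed system $(\Sigma_{\overline{\mathrm{co}}\,\M})$ rather than of $(\Sigma_{\M})$. The strict-decrease hypothesis of Definition~\ref{d-CLF} concerns only trajectories of $(\Sigma_{\M})$ and passes to relaxed trajectories only in nonstrict form (as a uniform limit of nonincreasing functions), so ``$V$ constant along the limit trajectory'' yields no contradiction; the same obstruction blocks your final compactness upgrade to uniform convergence over $S\times\Si_{\rm arb}(\M)$, which would require the decay to hold on the weak-$*$ closure of $\Si_{\rm arb}(\M)$. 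Worse, with the rate-free notion of strict decrease in Definition~\ref{d-CLF} the implication is actually false for general bounded $\M$: take $n=1$, $\M=\{-1/k\mid k\in\N,\ k\ge 1\}$ and $V(x)=x^2$. Along any nonzero trajectory one has $\frac{d}{dt}V(x(t))=2A(t)x(t)^2<0$ almost everywhere, so $V$ is strictly decreasing along every nonzero trajectory; yet $\sup_{A}|\Phi_A(t,0)|=\sup_{k}e^{-t/k}=1$ for every $t$, so $(\Sigma_\M)$ is not uniformly exponentially stable. Any correct proof must therefore invoke a quantitative decrease condition --- e.g.\ the inequality $\nabla V(x)^{\top}Mx\le -\|x\|^{\alpha}$ of Proposition~\ref{p-basic-universal}, which together with $c_1\|x\|^\alpha\le V(x)\le c_2\|x\|^\alpha$ gives $\frac{d}{dt}V(x(t))\le -c_2^{-1}V(x(t))$ and hence exponential decay by Gr\"onwall --- or additional structural hypotheses on $\M$ and $V$ under which a uniform rate can be extracted by compactness. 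A plan that deliberately avoids producing any rate, as yours does, cannot close.
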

\begin{remark}
In Definition~\ref{d-CLF} we do not require Lyapunov functions to be proper, i.e., such that the corresponding sublevel sets are compact. This is motivated by the fact that the existence a Lyapunov function in the sense of Definition~\ref{d-CLF} implies local stability of $(\Sigma_\M)$ and hence, by linearity of the system, its global stability. 
As a matter of fact, given a possibly non-proper Lyapunov function $V$ for $(\Sigma_\M)$, a proper Lyapunov function may always be constructed by considering the Minkowski functional associated with a sublevel set of $V$, see e.g.~\cite{blanchini1999set}.
\end{remark}
In case the strict common Lyapunov  function $V:\mathbb{R}^n\to\mathbb{R}_+$ in the above theorem is of class $\mathcal{C}^1$ on $\mathbb{R}^n\setminus\{0\}$,  a standard test for checking the strict decrease of $V$ along non-trivial trajectories of $(\Sigma_\M)$ goes as follows:
\begin{equation}\label{eq:strict-decrease}
\nabla V(x)^\top Mx<0, \qquad \forall M\in\M, \quad \forall x\in \mathbb{R}^n\setminus\{0\}.
\end{equation}

\begin{definition}
A set $\mathcal{P}$ of functions from $\R^n$ to $\R$ is a \emph{universal class of Lyapunov functions} if 
for every bounded set $\M\subset M_n(\R)$ such that $(\Sigma_\M)$ is uniformly 
exponentially stable there exists a common Lyapunov function for $(\Sigma_\M)$ 
in $\mathcal{P}$.
\end{definition}
An equivalent formulation of the universality of a class  $\mathcal{P}$ is that \emph{the converse Lyapunov theorem holds true within $\mathcal{P}$}.

As mentioned in the introduction, the theoretical construction of a common Lyapunov function for linear switched systems can be easily obtained. For instance, a locally Lipschitz continuous Lyapunov function may be defined as 
\begin{equation}\label{basic-LF}
V(x) =\sup_{A\in L^\infty(\mathbb{R}_+,\M)}
\int_0^{+\infty} \|\Phi_A(t,0) x\| d t,
\end{equation}
and may be regularized outside the origin by convolution with a smooth function. 
This classical construction leads to the following result (see e.g.~\cite{meilakhs1979design,DM,bcm}).
\begin{proposition}\label{prop:firstuniversal}
Let  $\alpha\ge 1$ and $\mathcal{P}$ be the class of   
convex absolutely homogeneous functions of degree $\alpha$ on $\R^n$ that are positive and smooth on $\R^n\setminus\{0\}$. Then $
\mathcal{P}$ is universal.
Moreover, 
  for every bounded set $\M\subset M_n(\R)$ such that $(\Sigma_\M)$ is uniformly 
exponentially stable, there exists $\varepsilon>0$ and a common Lyapunov function $V\in \mathcal{P}$ for $(\Sigma_\M)$  such that
$\nabla V(x)^{\top} Mx\leq -\|x\|^{\alpha}$ for every $x\in\R^n\setminus \{0\}$ and $M\in\M$.
\label{p-basic-universal}
\end{proposition}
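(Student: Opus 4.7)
The plan is to build $V$ in two stages: first use the classical value-function construction to produce a convex, absolutely homogeneous candidate that is only locally Lipschitz, and then smooth it by averaging over a small neighbourhood of the identity in $SO(n)$. The crucial point is that this averaging, being done through isometries that commute with dilations, preserves both convexity and absolute homogeneity, whereas a Euclidean convolution would destroy homogeneity.

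I would first set
\[
W(x)=\sup_{A\in\Si_{\rm arb}(\M)}\int_0^{+\infty}\|\Phi_A(t,0)x\|^\alpha\,dt.
\]
Uniform exponential stability of $(\Sigma_\M)$ makes $W$ finite with $W(x)\le C\|x\|^\alpha$, while Gronwall's lemma (using boundedness of $\M$) gives $W(x)\ge c\|x\|^\alpha$. Absolute homogeneity of degree $\alpha$ is immediate from the linearity of $\Phi_A(t,0)$, and convexity follows from the convexity of $y\mapsto\|y\|^\alpha$ for $\alpha\ge1$ combined with preservation of convexity under affine substitution, integration and supremum. A standard concatenation argument yields the dynamic programming inequality
\[
W(\Phi_A(s,0)x)\le W(x)-\int_0^s\|\Phi_A(u,0)x\|^\alpha\,du,\qquad A\in\Si_{\rm arb}(\M),\ s\ge0.
\]
Since uniform exponential stability is robust against $L^\infty$-small perturbations of the switching set, the enlarged set $\M_\eta:=\bigcup_{R\in SO(n),\,\|R-\Id\|\le\eta}R\M R^{-1}$ generates a uniformly exponentially stable system for $\eta$ small enough, and I repeat the construction with $\M_\eta$ in place of $\M$, obtaining $W_\eta$ with the same properties and for which the Bellman inequality holds for every switching law taking values in $\M_\eta$.

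Next I would regularise by setting
\[
V(x)=\int_{SO(n)}W_\eta(Rx)\,\rho(R)\,dR,
\]
with $\rho\in C^\infty(SO(n))$ nonnegative, integrating to $1$, and supported in $\{R:\|R-\Id\|\le\eta\}$. Convexity and absolute homogeneity of degree $\alpha$ are inherited from $W_\eta$. For smoothness on $\R^n\setminus\{0\}$, by homogeneity it suffices to prove smoothness on $S^{n-1}$: near any $\omega_0\in S^{n-1}$ I pick a local smooth section $\omega\mapsto T(\omega)\in SO(n)$ with $T(\omega)\omega_0=\omega$ and substitute $R=ST(\omega)^{-1}$, using bi-invariance of Haar measure, to get
\[
V(\omega)=\int_{SO(n)}W_\eta(S\omega_0)\,\rho\bigl(ST(\omega)^{-1}\bigr)\,dS,
\]
where the $\omega$-dependence now lies entirely in the smooth density $\rho(ST(\omega)^{-1})$ while $S\mapsto W_\eta(S\omega_0)$ is merely bounded; dominated convergence then gives $V\in C^\infty(S^{n-1})$. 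The decrease property is obtained by integrating the Bellman inequality for $W_\eta$: for $A$ with values in $\M$ and $R$ in the support of $\rho$, the law $t\mapsto RA(t)R^{-1}$ takes values in $\M_\eta$, $\Phi_{RAR^{-1}}(s,0)=R\Phi_A(s,0)R^{-1}$, and $R$ preserves $\|\cdot\|$, so
\[
V(\Phi_A(s,0)x)\le V(x)-\int_0^s\|\Phi_A(u,0)x\|^\alpha\,du.
\]
Dividing by $s$, letting $s\to 0^+$ with $A\equiv M\in\M$ constant, and using smoothness of $V$ yields $\nabla V(x)^\top Mx\le-\|x\|^\alpha$ for every $M\in\M$ and $x\ne0$.

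The main obstacle is the smoothness of $V$ on $\R^n\setminus\{0\}$: since $W_\eta$ is only locally Lipschitz one cannot differentiate under the integral sign naively, and the whole point of averaging over $SO(n)$ rather than by a translation-invariant mollifier is that the change of variables above transfers the $x$-dependence onto the smooth weight $\rho$. The companion subtlety is the preliminary enlargement from $\M$ to $\M_\eta$: it is what makes the conjugated laws $RAR^{-1}$ legitimate inputs for the Bellman inequality, while restricting the averaging to the isometry group $SO(n)$ rather than the full $GL(n)$ is exactly what prevents any loss of constant on the right-hand side and delivers the sharp bound $-\|x\|^\alpha$.
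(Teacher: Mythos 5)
Your proof is correct and follows the classical construction that the paper itself invokes for this proposition (the paper gives no detailed argument, only citing the value-function construction followed by a regularisation outside the origin). Your $SO(n)$-averaging, combined with the preliminary conjugation-enlargement of $\M$, is a sound way to carry out that smoothing while preserving convexity, absolute homogeneity and the sharp decay estimate, and the supporting steps (the Gronwall lower bound, the Bellman inequality, the robustness of uniform exponential stability under small perturbations of $\M$, and the differentiation under the integral via a local section of $SO(n)\to S^{n-1}$ and bi-invariance of the Haar measure) all check out.
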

Note that a globally smooth Lyapunov function may be constructed by classical regularization techniques developed in a nonlinear setting (see e.g.~\cite{kurzweil,sontag-converse}), at the price of 
losing homogeneity. 

Similar to Proposition~\ref{p-basic-universal}, the following 
converse Lyapunov result links the uniform stability of $(\Sigma_\M)$ with the existence of a nonstrict common Lyapunov function, see e.g. \cite[Theorem 2.2]{jungers}.
\begin{proposition}
Assume that $\M\subset M_n(\R)$ is bounded and $(\Sigma_\M)$ is uniformly stable. Then the function 
\begin{equation}
\label{eq:basicLyap}
V(x)=\sup_{t\geq 0,A\in L^\infty(\mathbb{R}_+,\M)} \|\Phi_A(t,0) x\|
\end{equation}
is absolutely homogeneous 
and a nonstrict common Lyapunov function for $(\Sigma_\M)$. 
\label{p-hyperbasic-universal}
\end{proposition}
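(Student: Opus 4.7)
The plan is to check each item of the definition of a nonstrict common Lyapunov function, together with the announced convexity and absolute homogeneity, directly from the defining formula \eqref{eq:basicLyap}. First I would verify that $V$ is well-defined and finite-valued: uniform stability gives a constant $C>0$ with $\|\Phi_A(t,0)\|\le C$ for every admissible switching law $A$ and every $t\ge 0$, so $V(x)\le C\|x\|$ for all $x\in\R^n$. Positive definiteness is immediate: $V(0)=0$, and for $x\ne 0$ the choice $t=0$ gives $\|\Phi_A(0,0)x\|=\|x\|>0$, hence $V(x)\ge \|x\|>0$.

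Next I would address the structural properties. Absolute homogeneity of degree one follows from the linearity of $\Phi_A(t,0)$: $\|\Phi_A(t,0)(\lambda x)\|=|\lambda|\,\|\Phi_A(t,0)x\|$, and taking the supremum yields $V(\lambda x)=|\lambda|V(x)$. Convexity follows because, for each fixed $t$ and $A$, the map $x\mapsto\|\Phi_A(t,0)x\|$ is a norm composed with a linear map, hence convex, and the pointwise supremum of convex functions is convex. The same family consists of $C$-Lipschitz functions of $x$ (with common constant coming from uniform stability), so $V$ is itself $C$-Lipschitz, hence continuous on $\R^n$.

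The crucial point is the monotonicity along trajectories. Fix any switching law $B\in\Si_{\rm arb}(\M)$, any initial condition $x_0$, and any $0\le s\le t$. Setting $x(s)=\Phi_B(s,0)x_0$ and $x(t)=\Phi_B(t,s)x(s)$, I want to show $V(x(t))\le V(x(s))$. For an arbitrary competitor $(\tau,A)$ in the supremum defining $V(x(t))$, I would concatenate $B|_{[s,t]}$ with the time-shifted law $A(\cdot-(t-s))$ on $[t-s,+\infty)$; the resulting switching law $\tilde A$ is measurable and still takes values in $\M$, hence lies in $\Si_{\rm arb}(\M)$, and satisfies
\[
\Phi_{\tilde A}(t-s+\tau,0)x(s)=\Phi_A(\tau,0)\Phi_B(t,s)x(s)=\Phi_A(\tau,0)x(t).
\]
Taking norms and the supremum over $(\tau,A)$ gives $V(x(t))\le V(x(s))$, which is the required non-increase.

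I do not foresee a real obstacle: the argument is a direct bookkeeping exercise. The only mild technicality is to make sure that the concatenated switching law is measurable and $\M$-valued, which is transparent since $B$ and $A$ are individually measurable with values in $\M$ and they are glued along a single time instant. Once this is in place, all of finiteness, positive definiteness, absolute homogeneity, convexity, continuity, and non-increase follow in a few lines from \eqref{eq:basicLyap}.
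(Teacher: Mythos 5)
Your proof is correct and is precisely the standard argument the paper leaves implicit (the proposition is stated there as "straightforward" with no written proof): finiteness, positive definiteness, homogeneity, and convexity read off directly from \eqref{eq:basicLyap}, and the monotonicity follows from the concatenation/semigroup property of switching laws exactly as you describe. No gaps.
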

Due to Proposition~\ref{p-basic-universal}, the continuous differentiability outside the origin of the common Lyapunov function 
is not a restrictive assumption when checking the uniform exponential stability of a linear switched system. On the other hand, the uniform stability of $(\Sigma_{\M})$ does not always imply the existence of a $\mathcal{C}^1$ nonstrict common Lyapunov function (see e.g. \cite[Example 3]{chitour-gaye-mason}).
Furthermore, even in case of uniform exponential stability, it may be useful to provide a criterion to ensure
 the existence of a common Lyapunov function in a class of non-differentiable functions, such as piecewise linear or piecewise quadratic ones.
For these reasons we introduce below a criterion which generalizes Equation~\eqref{eq:strict-decrease} and characterizes the family of (possibly nonstrict) common Lyapunov functions in a nonsmooth setting. We refer to \cite[Proposition 1]{bacciotti-ceragioli} for a similar result in the context of differential inclusions. 

We need the following preliminary result, which expresses the variation of a convex function $V$ along a trajectory in terms of the subdifferential of $V$. Recall that the subdifferential $\partial V(x)$ at a point $x\in \R^n$ is defined as
\[\partial V(x) = \{l\in \R^n\mid l^{\top}(y-x)\leq V(y)-V(x),\quad\forall y\in \R^n\}.\]
The proof of the lemma is similar to that of \cite[Lemma 1]{bacciotti-ceragioli} and is provided here for completeness.
\begin{lemma}\label{derivative}
Let $V:\mathbb{R}^n\to\mathbb{R}$ be a convex function and $\varphi:I \to \mathbb{R}^n$ be an absolutely continuous function, with $I\subseteq \mathbb{R}$ an open interval.
Then $V\circ \varphi$ is absolutely continuous and it holds 
\[\frac{d}{dt}V(\varphi(t)) = l^{\top}\dot\varphi(t),\quad \forall l\in \partial V(\varphi(t)),\quad \hbox{for a.e. }t\in I.\]
\end{lemma}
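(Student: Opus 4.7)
The plan is to split the claim into two parts: first, absolute continuity of $V\circ\varphi$; second, the pointwise derivative formula almost everywhere.

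For absolute continuity, I would invoke the standard fact that any finite convex function on $\R^n$ is locally Lipschitz. On any compact subinterval $[a,b]\subset I$, continuity of $\varphi$ makes $K:=\varphi([a,b])$ compact, and so $V$ is $L$-Lipschitz on an open neighborhood of $K$ for some constant $L>0$. The bound $|V(\varphi(t))-V(\varphi(s))|\le L\,|\varphi(t)-\varphi(s)|$ for $s,t\in[a,b]$ then transfers absolute continuity from $\varphi$ to $V\circ\varphi$ on $[a,b]$, hence on all of $I$.

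For the derivative formula, let $N\subset I$ be the measure-zero set on which either $\varphi$ or $V\circ\varphi$ fails to be differentiable; both exceptional sets are null by absolute continuity. Fix $t\in I\setminus N$ and any $l\in\partial V(\varphi(t))$ (the subdifferential is nonempty because $V$ is finite and convex on $\R^n$). For every $h\ne 0$ with $t+h\in I$, the subgradient inequality applied with $y=\varphi(t+h)$ gives
\[V(\varphi(t+h))-V(\varphi(t))\ \ge\ l^{\top}\bigl(\varphi(t+h)-\varphi(t)\bigr).\]
Dividing by $h>0$ and sending $h\to 0^+$ yields $\tfrac{d}{dt}(V\circ\varphi)(t)\ge l^\top\dot\varphi(t)$, while dividing by $h<0$ (which reverses the inequality) and sending $h\to 0^-$ yields $\tfrac{d}{dt}(V\circ\varphi)(t)\le l^\top\dot\varphi(t)$. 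The two bounds coincide, proving the claimed equality; moreover they are independent of the choice of $l\in\partial V(\varphi(t))$, so the formula holds simultaneously for every element of the subdifferential.

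There is no real obstacle here. The only mildly subtle observation is that one avoids entirely any analysis of directional derivatives of $V$ itself: the two-sided subgradient squeeze around $t$ directly pins down the a.e. derivative of $V\circ\varphi$ and forces its value to equal $l^\top\dot\varphi(t)$ uniformly over $l\in\partial V(\varphi(t))$.
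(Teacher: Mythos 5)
Your proof is correct and follows essentially the same route as the paper's: local Lipschitz continuity of the convex function $V$ gives absolute continuity of the composition, and the two-sided subgradient inequality evaluated along $\varphi$ squeezes the a.e.\ derivative of $V\circ\varphi$ to equal $l^{\top}\dot\varphi(t)$ for every $l\in\partial V(\varphi(t))$. Your treatment of the Lipschitz step is in fact slightly more careful than the paper's (which asserts Lipschitz continuity of $V$ without restricting to a compact set), but the argument is otherwise identical.
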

\begin{proof}
As $V$ is convex, $V$ is Lipschitz and the composition $V\circ \varphi$ is absolutely continuous. Hence for almost every $t\in I$ the derivatives of both $\varphi$ and $V\circ \varphi$ are well-defined. By definition of subdifferential, for every $t,s\in I$ and $l\in \partial V(\varphi(t))$ we have 
\[l^{\top} (\varphi(s)-\varphi(t)) \leq V(\varphi(s))-V(\varphi(t)).\]
We deduce that
\begin{align*}
\frac{d}{dt}V(\varphi(t)) & = \lim_{s\to t^+} \frac{V(\varphi(s))-V(\varphi(t))}{s-t}\\
& \geq l^{\top} \lim_{s\to t^+} \frac{\varphi(s)-\varphi(t)}{s-t}\\
& = l^{\top} \dot\varphi(t)
\end{align*}
holds true  for almost every $t\in I$ and for every $l\in \partial V(\varphi(t))$. Similarly, taking the limit as $s\to t^-$, we obtain that $\frac{d}{dt}V(\varphi(t))\leq  l^{\top} \dot\varphi(t)$ for almost every $t\in I$ and for every $l\in \partial V(\varphi(t))$. This concludes the proof of the lemma.
\end{proof}
Here follows an adaptation to the nonsmooth setting of the characterization of common Lyapunov function. 
\begin{proposition} 
\label{Lyap-nonsmooth}
Let $\M$ be a bounded subset of $M_n(\R)$ and $V:\R^n\to\R_+$ be a convex positive definite 
function. 
Then $V$ is a nonstrict common Lyapunov function  for $(\Sigma_{\M})$ if and only if 
\begin{equation}
\label{Lyap-ineq}
l^{\top} Mx\leq 0,\qquad \forall x\in\R^n\setminus \{0\},\forall l\in \partial V(x),\forall M\in\M.
\end{equation}
 Moreover, if the inequality in \eqref{Lyap-ineq} is strict then $V$ is a common Lyapunov function  for $(\Sigma_{\M})$.
 \end{proposition}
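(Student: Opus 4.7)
The plan is to handle the two directions separately, with the nonsmooth derivative formula from Lemma~\ref{derivative} doing most of the work for one direction and the subdifferential inequality doing the work for the other.

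For the ``only if'' direction, I want to show that a nonstrict common Lyapunov function must satisfy \eqref{Lyap-ineq}. Fix $x\neq 0$, $M\in\M$, and $l\in\partial V(x)$, and consider the constant switching law $A\equiv M$, whose associated trajectory through $x$ is $\varphi(t)=e^{Mt}x$. Since $V$ is nonstrict common Lyapunov, $V(e^{Mt}x)-V(x)\leq 0$ for $t\geq 0$. On the other hand, the very definition of $\partial V(x)$ yields
\[
V(e^{Mt}x)-V(x)\geq l^{\top}(e^{Mt}x-x)=t\, l^{\top}Mx+o(t).
\]
Combining, dividing by $t>0$ and letting $t\to 0^+$ gives $l^{\top}Mx\leq 0$, as desired. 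This argument avoids any differentiability issues because it uses only the one-sided subdifferential inequality.

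For the ``if'' direction in the nonstrict case, let $\varphi$ be any trajectory of $(\Sigma_\M)$ associated to a switching law $A\in\Si_{\rm arb}(\M)$. Then $\varphi$ is absolutely continuous with $\dot\varphi(t)=A(t)\varphi(t)$ almost everywhere. Lemma~\ref{derivative} gives, for a.e.\ $t$,
\[
\frac{d}{dt}V(\varphi(t))=l^{\top}A(t)\varphi(t)\qquad \forall l\in\partial V(\varphi(t)).
\]
If $\varphi(t)=0$ on some set the contribution is zero (using any $l\in\partial V(0)$); otherwise \eqref{Lyap-ineq} applied with $M=A(t)\in\M$ bounds the right-hand side by $0$. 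Hence $V\circ\varphi$ is non-increasing, so $V$ is a nonstrict common Lyapunov function.

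For the strict statement, I start from the fact that nontrivial trajectories of $(\Sigma_\M)$ never touch the origin: since $\M$ is bounded, Gronwall gives $\|\varphi(r)\|\geq \|\varphi(0)\|\exp(-\int_0^r\|A(s)\|ds)>0$. Therefore, on any interval $[s,t]$ with $s<t$, the integrand $l^{\top}A(r)\varphi(r)$ in
\[
V(\varphi(t))-V(\varphi(s))=\int_s^t l(r)^{\top}A(r)\varphi(r)\,dr
\]
is strictly negative for a.e.\ $r\in[s,t]$ by the strict version of \eqref{Lyap-ineq}. A measurable function that is a.e.\ strictly negative on a set of positive measure has strictly negative integral (decompose the negativity set into levels $\{f\leq -1/n\}$), so $V(\varphi(t))<V(\varphi(s))$, proving strict decrease. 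I do not anticipate a serious obstacle here; the only mildly delicate point is making sure that the pointwise strict inequality at every $r$ implies strict negativity of the integral, but this is a standard measure-theoretic observation.
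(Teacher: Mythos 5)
Your proof is correct, and the interesting difference lies in the ``only if'' direction. The paper argues by contradiction: if $l^{\top}Mx>0$ for some $l\in\partial V(x)$, it invokes \cite[Theorem~25.6]{rockafellar} to approximate $(x,l)$ by a pair $(y,\nabla V(y))$ at a differentiability point, and then observes that $V$ increases along $t\mapsto e^{tM}y$. You instead work directly at the (possibly nondifferentiable) point $x$: the subgradient inequality gives $t\,l^{\top}Mx+o(t)=l^{\top}(e^{Mt}x-x)\le V(e^{Mt}x)-V(x)\le 0$, and dividing by $t>0$ yields the conclusion. Your route is more elementary --- it needs no approximation theorem for subdifferentials, only the defining inequality of $\partial V(x)$ and a first-order expansion of $e^{Mt}x$ --- and it even sidesteps the small subtlety that Rockafellar's theorem describes $\partial V(x)$ as a convex hull of limiting gradients rather than the set of such limits itself. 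The ``if'' and strict parts of your argument coincide with the paper's, which simply cites Lemma~\ref{derivative}; you spell out the two details the paper leaves implicit, namely that nontrivial trajectories of a linear system never reach the origin (Gronwall) and that an a.e.\ strictly negative derivative integrates to a strictly negative increment. Both writeups are sound; yours is self-contained at the cost of being slightly longer.
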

 \begin{proof} 
The second part of the proposition and the \emph{if} implication in the first part directly follow from Lemma~\ref{derivative}. We are left to show that if $V$ is a nonstrict common Lyapunov function  for $(\Sigma_{\M})$, then the inequality~\eqref{Lyap-ineq} holds true. By contradiction, suppose that there exist $x\in\R^n, l\in \partial V(x)$, and $M\in\M$ such that $l^{\top} Mx > 0$. By~\cite[Theorem~25.6]{rockafellar} one may find a differentiability point $y$ of $V$ such that the pair $(y,\nabla V(y))$ is arbitrarily close to $(x,l)$. In particular we may assume $\nabla V(y)^\top My>0$, that is, $V$ is increasing at $t=0$ along the trajectory $t\mapsto e^{tM}y$, leading to a contradiction.
\end{proof}

\section{
Sufficient conditions for universality
}\label{s:universal-lyapunov}

Given a linear switched system $(\Sigma_\M)$, the family $\mathcal{P}$ identified by Proposition~\ref{p-basic-universal} is 
too broad to admit a tractable parameterization, suitable 
for investigating numerically 
the existence of a Lyapunov function. With this goal in mind, interesting candidate classes $\mathcal{P}$ are those parametric families of functions for which the property of being positive definite and strictly decreasing along all admissible dynamics can be translated into numerically verifiable algebraic relations or inequalities (e.g., linear matrix inequalities). It is well-known that piecewise-quadratic, polynomial, and polyhedral functions represent examples of such families~\cite{molch1,molch2,molch-scl,blanchini}.

We next provide a general sufficient condition for a class $\mathcal{P}$ to be universal, namely its density in the class of convex absolutely homogeneous functions for the topology of uniform convergence on compact sets. 
The proof of the sufficient condition exploits
the fact, specific to convex functions defined on compact sets, that being close in the uniform norm is equivalent to possessing ``close'' subdifferentials.

\begin{proposition}\label{prop-approx}
Let $\mathcal{P}$ be
 a subset of  the family of convex absolutely homogeneous functions 
 from $\R^n$ to $\R_+$. Assume that for every convex absolutely homogeneous function $V:\R^n\to\R_+$ 
 and every $\delta>0$ there exists a function $W$ in $\mathcal{P}$ such that $\|W(x)-V(x)\|\leq \delta $ for every $x$ in the unit sphere $S^{n-1}$ of $\R^n$. 
Then $\mathcal{P}$ is a universal class of Lyapunov functions.
\end{proposition}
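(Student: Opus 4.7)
The plan is to argue by contradiction, using Proposition~\ref{p-basic-universal} to produce a smooth reference Lyapunov function $V_0$ and Proposition~\ref{Lyap-nonsmooth} (in its strict direction) to certify the Lyapunov property of a nearby approximant. Fix a bounded $\M\subset M_n(\R)$ such that $(\Sigma_\M)$ is uniformly exponentially stable; replacing $\M$ by its closure if necessary, one may assume $\M$ compact. Applying Proposition~\ref{p-basic-universal} with $\alpha=1$ produces a convex, absolutely homogeneous function $V_0:\R^n\to\R_+$ of degree one, smooth on $\R^n\setminus\{0\}$, satisfying $\nabla V_0(x)^\top Mx\leq -\|x\|$ for all $x\neq 0$ and $M\in\M$. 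By hypothesis, for each integer $k\geq 1$ one selects $W_k\in\mathcal{P}$ with $\sup_{x\in S^{n-1}}|W_k(x)-V_0(x)|\leq 1/k$.

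Suppose by contradiction that for infinitely many $k$ the strict inequality $l^\top Mx<0$ of Proposition~\ref{Lyap-nonsmooth} fails for $W_k$. After extracting a subsequence, one obtains $x_k\in\R^n\setminus\{0\}$, $l_k\in\partial W_k(x_k)$, and $M_k\in\M$ with $l_k^\top M_k x_k\geq 0$. Because $W_k$ is absolutely homogeneous of degree one, its subdifferential is invariant under positive rescaling of the argument, so normalizing to $x_k\in S^{n-1}$ preserves the sign of $l_k^\top M_k x_k$. Absolute homogeneity also upgrades the sphere-wise estimate to $|W_k(x)-V_0(x)|\leq \|x\|/k$ for all $x\in\R^n$, so $W_k\to V_0$ uniformly on compact sets; in particular the $W_k$ are uniformly bounded, hence equi-Lipschitz, on a neighborhood of $S^{n-1}$, which bounds the sequence $(l_k)$. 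Passing to further subsequences, $x_k\to x^*\in S^{n-1}$, $l_k\to l^*$, and $M_k\to M^*\in\M$.

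The crucial step is a closed-graph property of convex subdifferentials under uniform convergence: passing to the limit in the defining inequality $l_k^\top(y-x_k)\leq W_k(y)-W_k(x_k)$ for arbitrary $y\in\R^n$ yields $(l^*)^\top(y-x^*)\leq V_0(y)-V_0(x^*)$, hence $l^*\in\partial V_0(x^*)$. Since $V_0$ is smooth at $x^*\neq 0$, this forces $l^*=\nabla V_0(x^*)$, and passing to the limit in $l_k^\top M_k x_k\geq 0$ produces $\nabla V_0(x^*)^\top M^* x^*\geq 0$, which contradicts $\nabla V_0(x^*)^\top M^* x^*\leq -\|x^*\|=-1$. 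Therefore, for all sufficiently large $k$, the strict inequality $l^\top Mx<0$ holds for $W_k$, and Proposition~\ref{Lyap-nonsmooth} identifies $W_k$ as a common Lyapunov function for $(\Sigma_\M)$; thus $\mathcal{P}$ is universal. The main obstacle is controlling $(l_k)$ and justifying the limit passage for subgradients; both steps rely essentially on upgrading the sphere-wise $C^0$ approximation to a local $C^0$ approximation through absolute homogeneity, which clarifies why homogeneous approximation is the correct hypothesis.
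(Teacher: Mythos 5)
Your proof is correct and follows essentially the same strategy as the paper's: approximate the smooth homogeneous Lyapunov function from Proposition~\ref{p-basic-universal}, argue by contradiction with a sequence of failing approximants, and pass to the limit in the subgradients to contradict $\nabla V_0(x^*)^\top M^* x^*\leq -1$. The only difference is cosmetic: where the paper invokes Rockafellar's Theorem~24.5 for the convergence $l_k\to\nabla V_0(x^*)$, you derive it by hand via equi-Lipschitzness and the closed-graph property of the subdifferential, which is a perfectly valid substitute.
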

\begin{proof}
Let $(\Sigma_{\M})$ be uniformly exponentially stable. 
Let $V$ be the absolutely homogeneous 
common Lyapunov function provided by Proposition~\ref{p-basic-universal}.
In particular, $V$ is smooth on $\R^n\setminus\{0\}$.
In order to prove the proposition, it is enough to show that any convex  absolutely homogeneous function close enough to $V$ on $S^{n-1}$ in uniform norm is 
itself 
a Lyapunov function for $(\Sigma_{\M})$. 

We proceed by contradiction: we assume that there exists a sequence of convex absolutely homogeneous functions $(W_k)_{k\in\mathbb{N}}$ converging uniformly to $V$ on $S^{n-1}$ as $k$ goes to infinity and such that each $W_k$ is not strictly decreasing along at least one trajectory of the system. In particular the derivative of $W_k$ along such a trajectory is nonnegative on a set of times of positive measure. By Lemma~\ref{derivative} and absolute homogeneity of $W_k$, we  deduce that there exist $x_k\in S^{n-1}$ and $M_k\in\M$ such that, for every fixed $l_k\in \partial W_k(x_k)$, one has $l_k^{\top} M_k x_k\geq 0$.
By compactness, we may assume that  $x_k$ tends to $\bar x\in S^{n-1}$ as $k$ goes to infinity. Then, by~\cite[Theorem~24.5]{rockafellar},
$l_k$ converges to $\nabla V(\bar x)$, so that, by boundedness of $\M$, $\limsup_{k\to\infty} \nabla V(\bar x)^{\top} M_k\bar x = \limsup_{k\to\infty} l_k^{\top} M_k x_k \geq 0$.
However, it follows by the choice of $V$ and Proposition~\ref{p-basic-universal} that 
 $\nabla V(\bar x)^{\top} M_k\bar x \leq -1$, yielding a contradiction. 
\end{proof}

\begin{remark}
By the absolute homogeneity property, the statement of Proposition~\ref{prop-approx} could be equivalently reformulated by fixing $\delta=1$. 
\end{remark}

As an application of the previous result, 
two classical examples of universal classes of Lyapunov functions (cf.~\cite{blanchini,molch1,molch2,molch-scl}) are recalled in the following corollary.

\begin{corollary}
\label{c-universal}
The family of polyhedral functions 
$\{\max_{k=1,\dots,N} |l_k^{\top} x|\mid l_k\in\mathbb{R}^n,\,N\in\mathbb{N}\}$
and that of homogeneous sums of squares 
$\{\sum_{k=1}^N (l_k^{\top} x)^{2d}\mid l_k\in\mathbb{R}^n,\,d,N\in\mathbb{N}\}$ 
are universal classes of Lyapunov functions. 
\end{corollary}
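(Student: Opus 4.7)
The plan is to apply Proposition~\ref{prop-approx} to both families. The key observation underlying both cases is that any convex absolutely homogeneous function $V:\R^n\to\R_+$ of degree one admits the representation $V(x)=\sup_{l\in A}l^\top x$, where $A=\partial V(0)$ is a compact convex subset of $\R^n$. The symmetry $A=-A$ follows from $V(-x)=V(x)$, and yields the equivalent expression $V(x)=\sup_{l\in A}|l^\top x|$.

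For the polyhedral family, I would extract a finite $\eps$-net from the compact set $A$ and symmetrize it by adjoining the opposites of its elements. The resulting set $\{l_1,\dots,l_N\}$ defines a polyhedral function $W(x)=\max_k|l_k^\top x|$ whose uniform distance to $V$ on the sphere $S^{n-1}$ is controlled by $\eps$ via Cauchy--Schwarz. Thus the hypothesis of Proposition~\ref{prop-approx} is verified and universality of the polyhedral class follows.

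For the sums-of-squares family a preliminary reduction is needed, because each $W(x)=\sum_{k=1}^N (l_k^\top x)^{2d}$ is absolutely homogeneous of degree $2d$, not one. I would instead work with the auxiliary class $\mathcal{P}'=\{(\sum_{k=1}^N (l_k^\top x)^{2d})^{1/(2d)}:l_k\in\R^n,\,d,N\in\N\}$, whose elements are convex (by Minkowski's inequality, being $\ell^{2d}$-norms of the linear vector $(l_1^\top x,\dots,l_N^\top x)$) and absolutely homogeneous of degree one. Given $V$ and $\delta>0$, I would first approximate $V$ within $\delta/2$ by a polyhedral function $W_0(x)=\max_k|l_k^\top x|$ as in the previous step, then exploit the elementary inequalities
\[
W_0(x)\le \Bigl(\sum_{k=1}^N |l_k^\top x|^{2d}\Bigr)^{1/(2d)}\le N^{1/(2d)}\,W_0(x)
\]
to approximate $W_0$ itself, within $\delta/2$ on $S^{n-1}$, by an element of $\mathcal{P}'$ for $d$ chosen sufficiently large. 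Proposition~\ref{prop-approx} then gives universality of $\mathcal{P}'$, and Remark~\ref{rem:riscala} applied to $\varphi(t)=t^{2d}$ transfers this universality to the original sums-of-squares family, since a positive definite continuous function is a common Lyapunov function if and only if its $(2d)$-th power is.

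The only subtlety worth noting is precisely this reduction from degree $2d$ to degree one via the auxiliary class $\mathcal{P}'$, since Proposition~\ref{prop-approx} is formulated only for degree-one functions. Everything else amounts to standard convex analysis (the subdifferential representation at the origin, $\eps$-nets of compact sets, and Minkowski's inequality), and no step presents a genuine obstacle.
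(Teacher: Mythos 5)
Your proposal is correct and follows the same overall strategy as the paper (uniform approximation on $S^{n-1}$ followed by Proposition~\ref{prop-approx}), but the polyhedral step is carried out with a different piece of convex analysis. The paper picks a dense sequence $(x_i)$ in $S^{n-1}$, takes subgradients $l_i\in\partial V(x_i)$, shows that $W_i(x)=\max_{j\le i}|l_j^{\top}x|$ satisfies $W_i\le V$ with equality at the points $x_k$, and then invokes Rockafellar's Theorem~10.8 to upgrade pointwise convergence on a dense set to uniform convergence; you instead use the support-function representation $V(x)=\sup_{l\in\partial V(0)}l^{\top}x$ and a finite $\varepsilon$-net of the compact symmetric set $\partial V(0)$, which yields a direct quantitative bound $\|W-V\|_{\infty,S^{n-1}}\le\varepsilon$ without any convergence theorem. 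Your route is slightly more elementary and explicit; the paper's is shorter given that it already cites Rockafellar elsewhere. For the sums-of-squares part the two arguments coincide (the same sandwich $W_0\le(\sum_k|l_k^{\top}x|^{2d})^{1/(2d)}\le N^{1/(2d)}W_0$, with the paper tying $d$ to the number of terms and you decoupling them), and you are in fact more careful than the paper on one point: Proposition~\ref{prop-approx} only applies to degree-one functions, so it directly certifies the class of $2d$-th roots, and the passage to the actual sums of $2d$-th powers requires composing with $\varphi(t)=t^{2d}$ as in Remark~\ref{rem:riscala} --- a step the paper leaves implicit and you state explicitly.
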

\begin{proof}
Let $V$ be a convex absolutely homogeneous function. 
Let $(x_i)_{i\in\N}$ be a dense sequence in $S^{n-1}$, and $l_i\in \partial V(x_i)$. We consider the increasing sequence of absolutely homogeneous functions  
defined by
\[W_i(x) =\max_{j=1,\dots, i} |l_j^{\top} x|,\quad \forall x\in\mathbb{R}^n.\]
Observe that each $W_i$ is convex and $W_i(x)\leq V(x)$ for every $x\in \R^{n}$. Indeed 
\begin{align*}
|l_j^{\top} x| & =  \max\{l_j^{\top} x ,l_j^{\top} (-x)\}\\
& = \max\{l_j^{\top} (x-x_j) ,l_j^{\top} (-x-x_j)\} + l_j^{\top} x_j\\
&\leq V(x)-V(x_j)+ l_j^{\top} x_j \\
&= V(x),
\end{align*}
for every positive integer $j$, by definition of subgradient and since $V(x) = V(-x)$ and $V(x_j) = l_j^{\top} x_j$. 
We deduce that $W_i(x_k) = V(x_k)$ for every $k\in \N$ and $i\geq k$, hence $\lim_{i\to \infty} W_i(x_k) = V(x_k)$ and we can apply~\cite[Theorem~10.8]{rockafellar} to conclude that the sequence of functions $W_i$ converges to $V$ uniformly on 
$S^{n-1}$. 
By applying Proposition~\ref{prop-approx} we get that the family of polyhedral functions is   a universal class of Lyapunov functions.

Let us now consider the absolutely homogeneous functions  
\[Z_i(x) =\left(\sum_{j=1}^i|l_j^{\top} x|^{2i}\right)^{\frac{1}{2i}}.\]
The function $Z_i$ is convex since it is the composition of the $2i$-norm on $\R^i$, i.e., $\|y\|_{2i}=\left(\sum_{j=1}^i y_j^{2i}\right)^{\frac{1}{2i}}$ for $y\in \R^i$, with the linear function from $\R^n$ to $\R^i$ mapping $x$ to $(l_1^{\top}x,\dots,l_i^{\top}x)^{\top}$.
Moreover it is immediate to see that $W_i(x)\leq Z_i(x) \leq i^{\frac{1}{2i}}W_i(x)$, and in particular \[\|Z_i(x)-V(x)\|\leq \|Z_i(x)-W_i(x)\|+\|W_i(x)-V(x)\|\]
tends to zero
uniformly on $S^{n-1}$ as $i$ goes to infinity.
By applying again Proposition~\ref{prop-approx}, it follows that the family of homogeneous sums of squares   is   a universal class of Lyapunov functions.
\end{proof}

\begin{remark}
According to Remark~\ref{rem:riscala},
the first part of Corollary~\ref{c-universal} remains valid if one replaces the piecewise linear functions $\max_{k=1,\dots,N} |l_k^{\top} x|$ with the functions $\big(\max_{k=1,\dots,N} |l_k^{\top} x|\big)^q=\max_{k=1,\dots,N} |l_k^{\top} x|^q$, for any given $q>1$. In particular, for $q=2$, we have that the family of piecewise quadratic functions is a universal class of Lyapunov functions.
\end{remark}

\begin{remark}
The proof of Proposition~\ref{prop-approx} relies on the fact that, whenever a linear switched system is uniformly 
exponentially stable, there exists a common  Lyapunov function which is convex.
In  the classical construction \eqref{basic-LF}, convexity and homogeneity of the Lyapunov function are direct consequences of the convexity and homogeneity of the map $x_0\mapsto \|\Phi_A(t,0)x_0\|$ for given $t\geq 0$ and $A\in L^\infty(\mathbb{R}_+,\M)$. 
In the nonlinear case, the
homogeneity property can be recovered from a homogeneity assumption on the vector fields. Similarly, the convexity property can be imposed as an additional requirement. 
Proposition~\ref{prop-approx}, and hence Corollary~\ref{c-universal},
can then be extended to any class of nonlinear switched systems 
\[\dot{x}(t)=f_{\sigma(t)}(x(t)),\qquad  \sigma\in L^\infty(\R_+,\Sigma),\] 
with $\Sigma\subset \R^m$ a bounded set of parameters, satisfying the following conditions:
\begin{itemize}
\item $f_\sigma$ is  
homogeneous of degree one for every $\sigma\in \Sigma$, that is $f_{\sigma}(\lambda x) = \lambda f_{\sigma}(x)$ for every $x\in \R^n$ and $\lambda\in \R$,
\item denoting by $x(t,x_0,\sigma(\cdot))$ the solution at time $t$ of the system starting at $x_0$ and corresponding to the switching law $\sigma\in L^\infty(\R_+,\Sigma)$, the function $x_0\mapsto \|x(t,x_0,\sigma(\cdot))\|$ is convex,
\item for every $R>0$, $\{f_\sigma|_{B(0,R)}\mid  \sigma\in \Sigma\}$ is a  bounded  
subset of $\mathcal{C}(B(0,R),\R^n)$.
\end{itemize} 
The last condition replaces the boundedness of $\M$
which is required in the proofs of both Propositions~\ref{prop:firstuniversal} and \ref{prop-approx}.
\end{remark}

\section{
Necessary conditions for universality
}

Next, we provide restrictions on the classes of functions which may be candidate to be universal. For this purpose, we introduce the following technical result.
\begin{lemma}
\label{l-stability-extension}
Let $\M_1,\M_2$ be bounded 
subsets of $M_n(\R)$ and assume that $(\Sigma_{\M_1})$ is uniformly stable. For $\nu>0$, denote by  $\M_2^{\nu}$ the set  of matrices of the form $M-\nu\Id_n$
for $M\in \M_2$, where $\Id_n$ is the $n\times n$ identity matrix. Set $\M=\M_1\cup\M_2^{\nu}$. Then, the switched system $(\Sigma_{\M})$ is uniformly stable for $\nu>0$ large enough.
\end{lemma}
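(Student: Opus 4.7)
The plan is to construct a single nonstrict common Lyapunov function for the enlarged system $(\Sigma_{\M})$ by starting from one associated with $(\Sigma_{\M_1})$ and checking that the shift by $-\nu\Id_n$ compensates, in the subdifferential inequality of Proposition~\ref{Lyap-nonsmooth}, for the contribution of the matrices coming from $\M_2$.

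Concretely, since $(\Sigma_{\M_1})$ is uniformly stable and $\M_1$ is bounded, Proposition~\ref{p-hyperbasic-universal} provides a convex absolutely homogeneous function $V:\R^n\to\R_+$ of degree one which is a nonstrict common Lyapunov function for $(\Sigma_{\M_1})$. I would first collect a few elementary properties of $V$: from the definition in~\eqref{eq:basicLyap} one has $\|x\|\leq V(x)\leq C\|x\|$ for some $C>0$, so in particular $V$ is positive definite and there exists $c>0$ with $V(x)\geq c\|x\|$ everywhere. Moreover, being convex and $1$-homogeneous, $V$ is globally Lipschitz on $\R^n$, so that $\Lambda:=\sup\{\|l\|\mid x\neq 0,\;l\in\partial V(x)\}$ is finite. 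A further key identity, following from the subdifferential inequality $l^\top(\lambda x-x)\leq V(\lambda x)-V(x)=(\lambda-1)V(x)$ applied for $\lambda>1$ and $\lambda<1$ (Euler relation for $1$-homogeneous convex functions), is that
\[
l^\top x=V(x)\qquad \text{for every }x\in\R^n,\;l\in\partial V(x).
\]

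I would then invoke Proposition~\ref{Lyap-nonsmooth} to convert the nonstrict Lyapunov property for $(\Sigma_{\M_1})$ into the inequality $l^\top M_1 x\leq 0$ for every $x\neq 0$, $l\in\partial V(x)$ and $M_1\in\M_1$. The crucial computation is then that, for every $M\in\M_2$, $x\neq 0$ and $l\in\partial V(x)$,
\[
l^\top(M-\nu\Id_n)x=l^\top Mx-\nu\,l^\top x\leq \Lambda\,R_2\,\|x\|-\nu V(x)\leq (\Lambda R_2-\nu c)\|x\|,
\]
where $R_2:=\sup_{M\in\M_2}\|M\|<\infty$. Choosing $\nu\geq \Lambda R_2/c$ therefore ensures that the inequality \eqref{Lyap-ineq} of Proposition~\ref{Lyap-nonsmooth} holds for every matrix in $\M=\M_1\cup\M_2^\nu$. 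Consequently $V$ is a nonstrict common Lyapunov function for $(\Sigma_{\M})$, and uniform stability of $(\Sigma_{\M})$ follows from Theorem~\ref{th:Lyap-direct}.

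The main thing to be careful about is the justification of the Euler-type identity $l^\top x=V(x)$ on the whole subdifferential and the uniform boundedness of $\partial V$ on $\R^n\setminus\{0\}$; both are standard for $1$-homogeneous convex functions, but they are what makes the argument go through, because they turn the shift $-\nu\Id_n$ into a genuinely negative contribution $-\nu V(x)\leq -\nu c\|x\|$ that dominates the bound $\Lambda R_2\|x\|$ on $l^\top Mx$. No trajectory-by-trajectory concatenation argument is required, since the whole proof is phrased at the level of a single, universally available, convex Lyapunov function.
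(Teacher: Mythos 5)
Your proof is correct and follows essentially the same route as the paper's: take the convex, degree-one absolutely homogeneous nonstrict Lyapunov function $V$ from Proposition~\ref{p-hyperbasic-universal}, use the Euler identity $l^\top x=V(x)$ together with the boundedness of the subdifferential and of $\M_2$ to make the term $-\nu\,l^\top x$ dominate $l^\top Mx$, and conclude via Proposition~\ref{Lyap-nonsmooth}. The only cosmetic difference is that the paper normalizes to the level set $V^{-1}(1)$ while you carry explicit constants $c,\Lambda,R_2$ on all of $\R^n\setminus\{0\}$.
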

\begin{proof}
Proposition~\ref{p-hyperbasic-universal} guarantees the existence of a convex 
absolutely homogeneous
nonstrict Lyapunov function $V$ for $(\Sigma_{\M_1})$. 
Intuitively speaking, the lemma follows from the fact that, for $\lambda$ large enough, the vectors $Mx$, with $M\in \M_2^{\lambda}$ and $x\in \R^n$, point towards the interior of the sublevel set $V^{-1}([0,V(x)])$. Let us formalize this idea.
Since $l^{\top}x=V(x)$ whenever $l\in \partial V(x)$, by boundedness of $\M_2$ and of $\cup_{x\in V^{-1}(1)}\partial V(x)$~\cite[Theorem~24.7]{rockafellar}, for $\lambda>0$ large enough one has $l^{\top} (M-\lambda \Id_n)x=l^{\top} Mx-\lambda l^{\top}x =l^{\top} Mx-\lambda  < 0$ for every $M\in \M_2$, $x\in V^{-1}(1)$, and $l\in \partial V(x)$.
The result is then an immediate consequence of Proposition~\ref{Lyap-nonsmooth}. 
\end{proof}
\begin{theorem}
\label{no-universal}
Let $n\geq 2$ and $\mathcal{P}$ be a compact 
subset of the space of 
continuous
functions from $\mathbb{R}^n$ to $\mathbb{R}$
that are analytic on $\mathbb{R}^n\setminus\{0\}$,
endowed with the topology of uniform convergence on bounded sets. 
Assume that $\mathcal{P}$ does not  
contain the zero function. 
Then $\mathcal{P}$  cannot be  a universal class of Lyapunov functions.
\end{theorem}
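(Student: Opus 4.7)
The proof proceeds by contradiction: assume that $\mathcal{P}$ is a universal class. The aim is to produce a uniformly exponentially stable switched system $(\Sigma_{\M})$ none of whose common Lyapunov functions lies in $\mathcal{P}$, relying on the compactness of $\mathcal{P}$ and on the rigidity of real analytic functions.

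The strategy is as follows. For each parameter $\alpha$ in a continuous index set, I would build a uniformly exponentially stable system $(\Sigma_{\M_\alpha})$ using Lemma~\ref{l-stability-extension}: start from an auxiliary bounded set $\M_1\subset M_n(\R)$ closed under $S\mapsto -S$ and such that $(\Sigma_{\M_1})$ is uniformly stable---for instance a bounded set of skew-symmetric matrices, which is where the hypothesis $n\geq 2$ enters---and combine it with a test matrix $M_\alpha$ shifted by $-\nu_\alpha\Id$, plus a further shift by $-\delta_\alpha\Id$ to promote uniform stability to uniform exponential stability. By universality, each $(\Sigma_{\M_\alpha})$ admits a common Lyapunov function $V_\alpha\in\mathcal{P}$. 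Since $\mathcal{P}$ is compact in the topology of uniform convergence on bounded sets---and, for analytic functions, this type of compactness forces compactness also in the $C^{\infty}$ topology via Cauchy-type estimates on complex neighborhoods---a subsequence $V_{\alpha_j}$ converges, together with all its derivatives on compact subsets, to a nonzero $V_*\in\mathcal{P}$, and the strict Lyapunov inequalities $\nabla V_{\alpha_j}(x)^{\top}Mx<0$ pass to the limit as nonstrict inequalities for $V_*$, compatible with the nonsmooth characterization of Proposition~\ref{Lyap-nonsmooth}.

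The main obstacle lies in tuning the family $\{\M_\alpha\}$ so that the limit constraints genuinely force $V_*\equiv 0$. A natural first attempt is to take $\M_1$ rich enough, combined with $\delta_\alpha\to 0$, so that $\nabla V_*(x)^{\top}Sx=0$ for every skew-symmetric $S$ in a generating set---which, in view of $n\geq 2$, forces $\nabla V_*(x)$ to be parallel to $x$ at every $x\neq 0$ and hence $V_*(x)=g(\|x\|^2)$ for some analytic $g$ with $g(0)=0$. Then one chooses $M_\alpha$ so that, in the limit, $V_*$ must additionally satisfy $g'(\|x\|^2)\,x^{\top}M_*x\leq 0$ with $x^{\top}M_*x$ taking both signs on every sphere; this forces $g'\equiv 0$ and hence $V_*\equiv 0$, the desired contradiction. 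The delicate part, where the proof requires genuine care, is calibrating the auxiliary shifts $\nu_\alpha$ and $\delta_\alpha$ so that the non-radial obstruction carried by $M_\alpha$ survives the limit rather than being absorbed by the stabilizing radial term $-\nu_\alpha\Id$---a tension resolved either by a careful scaling argument or by choosing the test matrices to already possess intrinsic rotational structure so that they can be incorporated in $\M_\alpha$ without the full identity shift.
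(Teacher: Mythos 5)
Your skeleton (contradiction, extract a convergent subsequence of Lyapunov functions from the compact family $\mathcal{P}$, derive constraints on the nonzero limit $V_*$) matches the paper's, but the two steps that actually carry the argument both have problems. First, compactness in the topology of uniform convergence on bounded sets does \emph{not} upgrade to $C^1$ or $C^\infty$ compactness for families of real analytic functions: the family $\{x\mapsto \sin(kx)/k\}_{k}\cup\{0\}$ is $C^0_{\rm loc}$-compact and consists of analytic functions, yet the derivatives do not converge. (Montel-type estimates require uniform bounds on a fixed complex neighborhood, which the hypotheses do not provide.) So you cannot pass the pointwise inequalities $\nabla V_{\alpha_j}(x)^{\top}Mx<0$ to the limit. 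This particular gap is repairable: the paper instead passes to the limit the \emph{integrated} monotonicity $V_{\alpha_j}(\Phi(t,0)x)\le V_{\alpha_j}(x)$, which only needs $C^0$ convergence.

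The essential gap is the one you flag yourself and do not resolve: your mechanism cannot force $V_*\equiv 0$. Once the test matrix is shifted to $M_*-\nu\Id$ to make the system stable, the limiting constraint on a radial $V_*(x)=g(\|x\|^2)$ reads $2g'(\|x\|^2)\bigl(x^{\top}M_*x-\nu\|x\|^2\bigr)\le 0$ with the bracket everywhere negative, which only yields $g'\ge 0$ and is satisfied by every increasing $g$; more structurally, a family consisting of norm-preserving rotations plus heavily damped matrices admits no recurrent trajectory visiting different radii, so no constraint can tie together the values of $g$ at distinct radii. Note also that your argument makes no essential use of the analyticity of $V_*$, whereas analyticity is indispensable: the normalized convex absolutely homogeneous degree-one functions form a $C^0_{\rm loc}$-compact universal family not containing zero. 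The paper's resolution is a genuinely different device: a \emph{marginally} stable planar pair $\{M^0_1,M^0_2\}$ (from the worst-case classification in the reference \cite{sw-balde2}, case $\mathbf{S4}$) admitting a periodic trajectory with four switches; the limit $\bar V$ must be constant along this orbit, in particular on an arc of the analytic curve $t\mapsto e^{tM^0_1}x_0$, and analytic continuation along this spiral --- which sweeps every direction while converging to the origin --- forces $\bar V\equiv 0$. Without a marginally stable system carrying such a recurrent, radius-varying orbit (or an equivalent obstruction), the proof does not close.
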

\begin{proof}
We start by showing the 
theorem
in the case $n=2$. 
We proceed by contradiction, assuming that every uniformly exponentially stable switched system in the case where $\M$ consists of two matrices in $M_2(\R)$ admits a Lyapunov function in $\mathcal{P}$.
We consider a switched system corresponding to $\M^0= \{M^0_1,M^0_2\}\subset M_2(\R)$, where $M^0_1,M^0_2$ are Hurwitz, the corresponding trajectories rotate clockwise around the origin, the system is uniformly stable, but not attractive, and 
starting from every initial nonzero condition there exists a unique periodic trajectory, with four switches per period. The existence of such a system is obtained in~\cite[Theorem~1]{sw-balde2}, where it corresponds to the case $\mathbf{S4}$,  $\mathcal{R}=1$.
In particular, there exist 
$t_1,t_2>0$ such that $e^{t_1 M^0_1}e^{t_2 M^0_2}$ has an eigenvalue equal to $-1$, corresponding to an eigenvector $x_0$. Set $T=t_1+t_2$  and consider the switched systems associated with $\M^{\varepsilon}=\{M^{\varepsilon}_1,M^{\varepsilon}_2\}\subset M_2(\R)$, where $M^{\varepsilon}_i=M^0_i - \varepsilon\Id_2$ for $i=1,2$. For $\varepsilon \geq 0$ we consider the $T$-periodic switching sequence $A^{\varepsilon}(\cdot)$ which takes values $M^{\varepsilon}_1$ for $t\in [0,t_1)$ and $M^{\varepsilon}_2$ for $t\in [t_1,T)$.

Since every trajectory $x(\cdot)$ of $(\Sigma_{\M^{\varepsilon}})$ can be written as $t\mapsto e^{-\varepsilon t}y(t)$ where $y(\cdot)$ is a trajectory of $(\Sigma_{\M})$, then $(\Sigma_{\M^{\varepsilon}})$ is uniformly exponentially stable for $\eps>0$.
Hence, by assumption,  it admits a Lyapunov function $V_{\varepsilon}(\cdot)$ in $\mathcal{P}$. Since the latter is compact, there exists a sequence $(\varepsilon_{k})_{k\in\N}$ converging to zero such that $(V_{\varepsilon_k})_{k\in\N}$ converges to some  $\bar V\in\mathcal{P}$. Moreover, for every $t\geq 0$, 
\begin{align*}
\bar V(\Phi_{A^0}(t,0)x_0)&=\lim_{k\to\infty}V_{\varepsilon_k}(\Phi_{A^{\varepsilon_k}}(t,0)x_0)\\
&\leq \lim_{k\to\infty}V_{\varepsilon_k}(x_0) = \bar V(x_0).
\end{align*}
Since $\bar V(\Phi_{A^0}(2T,0)x_0) = \bar V(x_0)$ we deduce that $\bar V$ is constant along the trajectory $\Phi_{A^0}(\cdot,0)x_0$. The function $t\mapsto\bar V(e^{t M^0_1}x_0)$ is analytic for $x_0\ne0$, being the composition of analytic functions, and it is constantly equal to $\bar V(x_0)$ for $t\in [0,t_1]$. By analyticity, 
$t\mapsto \bar V(e^{t M^0_1}x_0)$ is constant for all $t>0$ and therefore it must be identically equal to $0$ since $\bar V(\lim_{t\to\infty}e^{t M^0_1}x_0) = \bar V (0) = 0$. Since every nonzero point of $\R^2$ may be written as $\mu e^{t M^0_1}x_0$ for some positive $\mu$ and $t$, we deduce that $\bar V$ must be identically zero, contradicting the assumptions on $\mathcal{P}$.

We are left to prove the result for $n>2$. For this purpose we consider 
$\M_1=\{\bar M^0_1,\bar M^0_2\}$ with
\[\bar M^0_i  = \left(\begin{array}{cc}M^0_i & 0\\ 0 & -\Id_{n-2}\end{array}\right),\]
where the matrices $M^0_i $ are defined as above. Let  $\lambda>0$ and $\M_2^{\lambda}$ be given by Lemma~\ref{l-stability-extension} with $\M_2=\{M\in {\rm so}(n)\mid \Vert M\Vert\leq 1\}$, where ${\rm so}(n)$ denotes the space of skew-symmetric $n\times n$ matrices. Define $\bar \M^0=\M_1\cup \M_2^{\lambda}$ and, 
for $\varepsilon> 0$, consider 
the switched system corresponding to $\bar\M^\varepsilon=\bar \M^0-\varepsilon\Id_{n}$.  
 It is clear that $(\Sigma_{\bar \M^\eps})$ is uniformly exponentially stable for every $\varepsilon>0$.
 
Letting $\Pi_{1,2}$  be the $(x_1,x_2)$ plane, i.e., $\Pi_{1,2} = \{x\in \R^n\mid x_3=\dots=x_n=0\}$, we notice that, starting from every $\bar x \in \Pi_{1,2}$, there exists a periodic trajectory of $(\Sigma_{\bar\M^0})$ lying  on $\Pi_{1,2}$.
The restrictions of functions in $\mathcal{P}$  to $\Pi_{1,2}$  form 
a compact set of functions on $\Pi_{1,2}$ that are analytic outside the origin. As  in the case $n=2$, we prove by contradiction that $\mathcal{P}$ is not universal. Assume that there exists a sequence $(V_{\varepsilon_k}(\cdot))_{k\in\N}$ of Lyapunov functions in  $\mathcal{P}$ for  $(\Sigma_{\bar\M^{\eps_k}})$ converging to $\bar V\in\mathcal{P}$. We can show as before that $\bar V$ is equal to $0$ on $\Pi_{1,2}$. 
Because of the choice of $\M_2$ and by construction of $\bar\M^0$,  every $1$-dimensional linear subspace of $\R^n$ may be reached in finite time from $\Pi_{1,2}$ via a trajectory of $(\Sigma_{\bar\M^0})$. Since $\bar V$ is non-increasing along such a trajectory,  we deduce  that $\bar V\equiv 0$ on $\R^n$, obtaining a contradiction.
\end{proof}
\begin{remark}\label{rem:Zero}
The assumption that the zero function is not in $\mathcal{P}$ cannot be removed from the hypotheses of Theorem~\ref{no-universal}. Indeed, consider 
the subset of polynomial functions 
made of the zero polynomial and, for every $N\ge 0$,  the polynomials of degree $N$ with 
absolute value of the coefficients upper bounded by a positive constant $c_N$, chosen in such a way that the supremum 
on the ball of radius $N$ of
the polynomial 
is 
 less than or equal to $1/(N+1)$. 
Since the class $\mathcal{P}$ contains a multiple of any polynomial, it is universal by Corollary~\ref{c-universal}.  It is also compact since
every  sequence in $\mathcal{P}$ admits  a subsequence with either degree going to infinity or constant degree. In the former case, the subsequence converges to zero for the topology of uniform convergence on bounded sets, while in the latter one the coefficients are uniformly bounded and hence the sequence admits a further converging subsequence. 
\end{remark}

\begin{example}
Consider the class of absolutely homogeneous functions of degree two 
\[\mathcal{P}=\left\{x\mapsto e^{\frac{x^\top Q_1 x}{\|x\|^2}}x^\top Q_2 x\mid \|Q_1\|\le 1,\,\|Q_2\|=1\right\}.\] 
The level sets of each element of $\mathcal{P}$ 
are obtained by deforming those of the  quadratic functions $x\mapsto x^\top Q_2 x$ by a positive $0$-homogeneous term. 

As each function in $\mathcal{P}$ is analytic outside the origin, by Theorem~\ref{no-universal} the class $\mathcal{P}$ is not universal, despite being richer than that of quadratic functions.
\end{example}

As a consequence of Theorem~\ref{no-universal} we obtain the following corollary which provides, in particular, a partial counterpart to Corollary~\ref{c-universal} for homogeneous polynomial functions. Namely, we recover that, if we impose a uniform bound on the degree, such functions do not form a universal class of Lyapunov functions, as already established in \cite{bcm}.
\begin{corollary}\label{cor-polynomials}
Let $n\geq 2$ and $\mathcal P$ be a finite-dimensional vector subspace of the space of continuous functions from $\mathbb{R}^n$ to $\mathbb{R}$ that are analytic on $\mathbb{R}^n\setminus\{0\}$. Then $\mathcal{P}$ is not universal.
In particular, for every positive integer $m$, the set of polynomial functions of degree at most $m$ from $\mathbb{R}^n$ to $\mathbb{R}$ is not  a universal class of Lyapunov functions.
\end{corollary}
\begin{proof}
Let $\{f_1,\dots,f_N\}$ be a basis of $\mathcal P$. The linear map $\varphi:\R^N\to \mathcal{P}$ defined as $\varphi(\alpha) = \sum_{i=1}^N\alpha_i f_i$  is continuous 
when $\mathcal{P}$ is endowed with the topology of uniform convergence on compact sets.
Indeed, on every compact set $K\subset\R^n$ and for every $\alpha^1,\alpha^2\in\R^N$, one has 
\[\|\varphi(\alpha^1)|_K-\varphi(\alpha^2)|_K\|_\infty
\leq \|\alpha^1-\alpha^2\|\max_{i=1,\dots,N}\|f_i|_K\|_\infty.\] 
Hence the image of the unit sphere via the map $\varphi$ is a compact set $\bar{\mathcal{P}}\subset \mathcal{P}$. Furthermore,  $\bar{\mathcal{P}}$ does not contain the zero function because $\{f_1,\dots,f_N\}$ is a linearly independent subset of $\mathcal P$. Applying Theorem~\ref{no-universal} we obtain that $\bar{\mathcal{P}}$ is not universal. As each element of $\mathcal{P}$ is a scalar multiple of an element of $\bar{\mathcal{P}}$ we deduce that  $\mathcal{P}$ is not universal either, concluding the proof of the first part of the corollary. 
Concerning the second part, it is enough to observe that the set of polynomial functions of degree at most $m$ from $\mathbb{R}^n$ to $\mathbb{R}$ is a finite-dimensional vector space of analytic functions.
\end{proof}

\begin{remark}
In order to avoid confusion, let us stress that 
a finite-dimensional space as the set  $\mathcal P$ appearing in the statement of Corollary~\ref{cor-polynomials} obviously contains $\{0\}$. This is not in contradiction with Theorem~\ref{no-universal}, since the latter is applied to $\bar{\mathcal{P}}$ and not to $ \mathcal{P}$ in our proof of the corollary. 
\end{remark}

\begin{example}
Let $n=2$, fix a positive integer $m$, and consider the vector space $\mathcal{P}$ of functions from $\mathbb{R}^2$ to $\mathbb{R}$ defined, in polar coordinates, by
\[(r,\theta)\mapsto r a_0 + r\sum_{i=1}^M (a_i \cos(2i \theta)+b_i \sin(2i \theta)),\]
where $a_0,\dots,a_m,b_1,\dots,b_m$ are in $\R$. 
Each function in $\mathcal{P}$ is analytic outside the origin of $\mathbb{R}^2$ and absolutely homogeneous. 

By Corollary~\ref{cor-polynomials} the class $\mathcal{P}$ is not universal.
\end{example}

\begin{example}
\label{numerical-example}
Let $n=2$ and consider the matrices
\[M_1=\begin{pmatrix}-0.1 & -1\\ 1& -0.1 \end{pmatrix},\qquad M_2=\begin{pmatrix}-0.1 & -\alpha\\ \frac 1\alpha& -0.1 \end{pmatrix},\]
where $\alpha>0$. Set $\M=\{M_1,M_2\}$. 
By using the results in \cite{sw-balde2}, we can deduce that $(\Sigma_\M)$ is uniformly exponentially stable if and only if $\alpha_1<\alpha<\alpha_2$, where $\alpha_1\approx 0.819$ and $\alpha_2\approx 1.367$. 

Taken $\alpha\in (\alpha_1,\alpha_2)$, let us focus on the minimal degree of a polynomial homogeneous common Lyapunov function for $(\Sigma_\M)$. 
The existence of a polynomial Lyapunov function homogeneous of a given degree $n$ can be tested using LMIs, as detailed in \cite[Theorems 3.4 and 3.6]{ChesiGarulli}. For $n\leq 32$ we established numerically the existence of homogeneous polynomial Lyapunov functions  for $\alpha\in (\alpha_1,\alpha_2^{(n)})$, where $\alpha_2^{(n)}$ is given in Table~\ref{table-SoS}.

 \begin{table}[h]
\centering
{
\caption{\label{table-SoS}}
}
\begin{tabular}{|c|c|c|c|c|c|}
\hline
 $n$ & 2 & 4 & 6 & 8 & 12\\
\hline
$\alpha_2^{(n)}$ & 1.22 & 1.325 & 1.325 & 1.348 & 1.356\\
\hline\hline
  $n$ & 16 & 20 & 24 & 28 & 32\\
\hline
 $\alpha_2^{(n)}$ & 1.36 & 1.3621 & 1.3634 & 1.3642 & 1.3647\\
\hline
\end{tabular}
\end{table}

In accordance with Corollary~\ref{cor-polynomials}, the minimal degree appears to diverge as $\alpha$ tends to $\alpha_2$. 

\end{example}

The conclusion of Corollary~\ref{cor-polynomials} can be
proved to hold true for functions involving maxima and minima within a finite family of polynomials
such as the class of polyhedral functions $V$ of the form
\begin{align*}
V(x)&=\max\{|l_1^{\top} x|,\dots,|l_N^{\top} x|\}\\
&=\max\{l_1^{\top} x,\dots,l_N^{\top} x,-l_1^{\top} x,\dots,-l_N^{\top} x\},
\end{align*}
with $l_1,\dots,l_N\in\mathbb{R}^n$, where $N$ is fixed. This partial counterpart to Corollary~\ref{c-universal} is a consequence of the following more general result.
\begin{theorem}\label{prop-suivante}
Let $n\ge 2$ and $\mathcal{P}^n_d$ be the family of  polynomial functions in $\mathbb{R}^n$ of degree at most $d$ and $l$ be a positive integer. Consider the family 
\begin{align*}
\mathcal{P}^n_{d,l}&=\{V\in\mathcal{C}(\R^n,\R)\mid \exists V_1,\dots,V_l\in\mathcal{P}^n_d\\
&\mbox{ s.t. } V(x)\in\{V_1(x),\dots,V_l(x)\},\ \forall x\in\mathbb{R}^n\}.
\end{align*}
Then, $\mathcal{P}^{n}_{d,l}$ is not universal.
\end{theorem}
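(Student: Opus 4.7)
The plan is to adapt the proof of Theorem~\ref{no-universal} to the piecewise-polynomial class $\mathcal{P}^n_{d,l}$, replacing the use of analyticity by a Zariski-density argument on arcs of log-spirals. First I reduce to $n=2$ by the same block-diagonal embedding with added skew-symmetric perturbations as in Theorem~\ref{no-universal}, noting that restrictions to $\Pi_{1,2}$ of functions in $\mathcal{P}^n_{d,l}$ still belong to $\mathcal{P}^2_{d,l}$. For $n=2$, assume by contradiction that for every $\varepsilon > 0$ the system $(\Sigma_{\M^\varepsilon})$ with $\M^\varepsilon = \M^0 - \varepsilon\Id_2$ (and $\M^0=\{M_1^0,M_2^0\}$ as in the proof of Theorem~\ref{no-universal}) admits a Lyapunov function $V_\varepsilon \in \mathcal{P}^2_{d,l}$ with representation $V_\varepsilon(x) \in \{V_{1,\varepsilon}(x), \dots, V_{l,\varepsilon}(x)\}$ for polynomials $V_{i,\varepsilon}$ of degree at most $d$.

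After a suitable normalization of $V_\varepsilon$ (by positive scalar multiplication), I extract a subsequence $\varepsilon_k \to 0$ along which the $V_{i,\varepsilon_k}$'s converge in coefficient norm to polynomials $\bar V_i$ (not all identically zero) and, via Arzel\`a--Ascoli, $V_{\varepsilon_k} \to \bar V$ uniformly on compact sets, with $\bar V$ continuous, not identically zero, and $\bar V(x) \in \{\bar V_i(x)\}_i$. The equi-Lipschitz bound needed for Arzel\`a--Ascoli follows from tuple compactness: a continuous selection among equi-Lipschitz polynomial pieces is itself equi-Lipschitz, since along any segment it decomposes into finitely many analytic sub-pieces with uniformly bounded derivatives. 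Both the tuple compactness and the nontriviality of $\bar V$ rely on the boundedness of $V_\varepsilon$ along the periodic orbit $\gamma$ of $(\Sigma_{\M^0})$ together with the fact that restriction of degree-$\leq d$ polynomials to an arc of $\gamma$, a Zariski-dense subset of $\R^2$, is injective on a finite-dimensional space, hence has bounded inverse.

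The core limit step is then the following. For every $\lambda > 0$, Lyapunov monotonicity of $V_{\varepsilon_k}$ along the trajectory $\gamma_{\varepsilon_k,\lambda}(t) = e^{-\varepsilon_k t}\lambda\gamma(t)$ of $(\Sigma_{\M^{\varepsilon_k}})$, together with $\gamma_{\varepsilon_k,\lambda}(2T) \to \lambda x_0$ and uniform convergence, yields $\bar V(\lambda\gamma(t)) = c(\lambda) := \bar V(\lambda x_0)$ for all $t$. On the first mode interval $[0,t_1]$, each $t\mapsto \bar V_i(\lambda e^{tM_1^0}x_0)$ is real analytic, so the level set $\{t \in \R : \bar V_i(\lambda e^{tM_1^0}x_0) = c(\lambda)\}$ is either discrete or all of $\R$; since these sets cover $[0,t_1]$, some $\bar V_{i(\lambda)}$ equals $c(\lambda)$ along the full orbit $\{\lambda e^{tM_1^0}x_0 : t \in \R\}$. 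As $M_1^0$ has complex conjugate eigenvalues, this orbit is a Zariski-dense log-spiral in $\R^2$, so $\bar V_{i(\lambda)} - c(\lambda)$ is the zero polynomial and $\bar V_{i(\lambda)}$ is a constant polynomial with value $c(\lambda)$.

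Since $i(\cdot) : (0, \infty) \to \{1, \ldots, l\}$ is finite-valued, the range of $c$ lies in the finite set of values of constant polynomials among $\{\bar V_i\}$. Continuity of $c$ on the connected set $(0,\infty)$ forces $c$ constant, and continuity at $0$ with $\bar V(0)=0$ forces the constant to be zero. Thus $\bar V \equiv 0$ on each $\lambda \gamma$, and since these scaled closed curves cover $\R^2 \setminus \{0\}$ (as $\gamma$ encircles the origin), $\bar V \equiv 0$ on $\R^2$, contradicting $\bar V \not\equiv 0$. The principal obstacle I foresee is the simultaneous normalization needed to keep the polynomial tuple compact and ensure that the limit $\bar V$ remains nontrivial; resolving this requires exploiting Lyapunov monotonicity together with the polynomial-restriction injectivity sketched above, which together prevent the used polynomials from blowing up in coefficient norm while $V_\varepsilon$ stays bounded on the orbit.
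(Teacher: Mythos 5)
Your high-level plan (send $\eps\to0$, show the limit object is constant on the periodic orbit of $(\Sigma_{\M^0})$, then use Zariski density of the log-spiral orbits of $e^{tM_1^0}$ to force the relevant polynomial pieces to be constants, hence zero) matches the paper's, and your endgame --- covering $[0,t_1]$ by the level sets $\{t:\bar V_i(\lambda e^{tM_1^0}x_0)=c(\lambda)\}$, each discrete or all of $\R$, and then using finiteness of the set of constant values together with continuity of $\lambda\mapsto\bar V(\lambda x_0)$ --- is a correct and rather elegant way to handle the piecewise structure \emph{once the limit objects exist}. The gap is precisely there: your compactness step does not work as stated. A single scalar normalization of $V_\eps$ rescales all pieces by the same factor, but each piece $V_{i,\eps}$ is constrained by $V_\eps$ only on the agreement set $E_{i,\eps}=\{x:V_\eps(x)=V_{i,\eps}(x)\}$, over which you have no control: a piece may be used only on an arbitrarily small, $\eps$-dependent subset of the orbit (or nowhere), and a degree-$\le d$ polynomial bounded on a tiny set can have arbitrarily large coefficients. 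Your ``injective restriction to an arc of $\gamma$, hence bounded inverse'' argument would require each used piece to agree with $V_\eps$ on a fixed determining set (say, a subarc of length bounded below uniformly in $\eps$), which you have not established and which can fail. Consequently neither the tuple compactness nor the equi-Lipschitz bound on $V_\eps$ (which you derive from it) is justified; without equicontinuity you cannot even extract the uniformly convergent subsequence $V_{\eps_k}\to\bar V$, since boundedness of a Lyapunov function along trajectories does not bound its local Lipschitz constant.

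This is exactly the difficulty the paper's proof is built to avoid: it never asks all the pieces to converge. Instead it considers the leading homogeneous parts $Q^\eps_{jk}$ of the differences $P^\eps_j-P^\eps_k$; each nontrivial one vanishes at most $2d$ times on $S^1$, so in a fixed partition of $S^1$ into sufficiently many arcs there is always one arc on which, after a sufficiently large dilation $\nu_\eps$, $V^\eps$ coincides with a \emph{single} polynomial. Since the arcs do not depend on $\eps$, one arc works along a subsequence; the paper then normalizes and passes to the limit only the leading homogeneous term $\hat V_m$ of that single polynomial, transferring the Lyapunov decay from $V^{\eps_m}$ to $\hat V_m$ via $\hat V_m(x)=\lim_{\nu\to\infty}\nu^{-d_m}V_m(\nu x)$ on the portion of the periodic orbit lying in that arc. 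If you wish to keep your Arzel\`a--Ascoli route you would need to supply the missing quantitative ingredient --- e.g., a pigeonhole argument giving one piece used on a subset of the orbit of measure bounded below, a Remez/Tur\'an-type inequality on the spiral arcs to bound that piece's coefficients, and a separate treatment of the pieces used only on small sets --- which is considerably more delicate than what you have written. (A minor remark: for the reduction to $n=2$ the skew-symmetric perturbations of Theorem~\ref{no-universal} are unnecessary here, since restricting a member of $\mathcal{P}^n_{d,l}$ to the plane $\Pi_{1,2}$ already lands in $\mathcal{P}^2_{d,l}$; the paper uses this simpler reduction.)
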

\begin{proof}
We first claim that if 
$\mathcal{P}^{n}_{d,l}$ is  universal, the same is true 
for $\mathcal{P}^{2}_{d,l}$.
Indeed, for every $\M\subset M_2(\R)$ such that $(\Sigma_\M)$ is uniformly exponentially stable, consider $\hat \M\subset M_n(\R)$ given by
\[\hat \M=\left\{\begin{pmatrix}M&0\\ 0& -\Id_{n-2}\end{pmatrix}\mid M\in \M\right\}.\]

If 
$\hat V\in \mathcal{P}^{n}_{d,l}$ is a
common Lyapunov function for $(\Sigma_{\hat \M})$, then $V:\R^2\ni x\mapsto \hat V(x,0)$
is a common Lyapunov function for $(\Sigma_{ \M})$ and $V\in \mathcal{P}^{2}_{d,l}$.

We are left to prove that $\mathcal{P}^{2}_{d,l}$ is not universal. 
Consider the switched systems $(\Sigma_{\M^\eps})$ introduced in the proof of Theorem~\ref{no-universal}, which are uniformly exponentially  stable for $\eps>0$, and only uniformly stable for $\eps=0$.
Assume by contradiction that $\mathcal{P}^2_{d,l}$ is universal and, in particular, that for every $\eps>0$ there exists a Lyapunov function $V^\eps\in\mathcal{P}^2_{d,l}$ for $(\Sigma_{\M^\eps})$. By definition of $\mathcal{P}^2_{d,l}$, for every $\eps>0$ there exist $l$ polynomials $P_1^\eps,\dots,P_l^\eps$ 
of degree at most $d$ such that
$V^\eps(x) \in \{P_1^\eps(x),\dots,P_l^\eps(x)\}$.
Given $j,k\in  \{1,\dots,l\}$, 
we investigate the set of zeroes of the polynomial  $Q_{jk}^\eps$ defined as the homogeneous polynomial corresponding to the terms of maximal degree of  $P_j^\eps-P_k^\eps$. 
 For this purpose, recall that, by the fundamental theorem of algebra, every  homogeneous polynomial $Q$ of positive degree $m$ may be factorized as $Q =  \prod_{k=1}^{m} (\alpha_k x_1 + \beta_k x_2)$, where $\alpha_k,\beta_k\in \mathbb{C}$ for $k=1,\dots,m$, so that its zeroes correspond to the intersection of the unit circle $S^{1}$ with at most $m$ lines through the origin.
Hence, it follows that either $Q_{jk}^\eps\equiv 0$ (i.e., $P_j^\eps\equiv P_k^\eps$) or $Q_{jk}^\eps$ vanishes at most $2d$ times on the unit circle. 
Moreover, for every $\eps>0$, the integer $N=2d\binom{l-1}{2}+1$ is a strict upper bound for the total number of zeroes of $Q_{jk}^\eps$ for $j,k\in  \{1,\dots,l\}$.
Partitioning the circle into $N$ arcs $\mathcal{C}_1,\dots,\mathcal{C}_{N}$ of equal length, for every $\eps>0$ there exists an arc $\mathcal{C}_{n_{\eps}}$ which contains no zero of the nontrivial polynomials $Q_{jk}^\eps$ in its interior. 
Denote by $\mathcal{A}_{n_{\eps}}$ the closed middle third of $\mathcal{C}_{n_{\eps}}$ (see Figure~\ref{fig:middlethird}).
\begin{figure}
    \centering
\includegraphics[width=7cm]{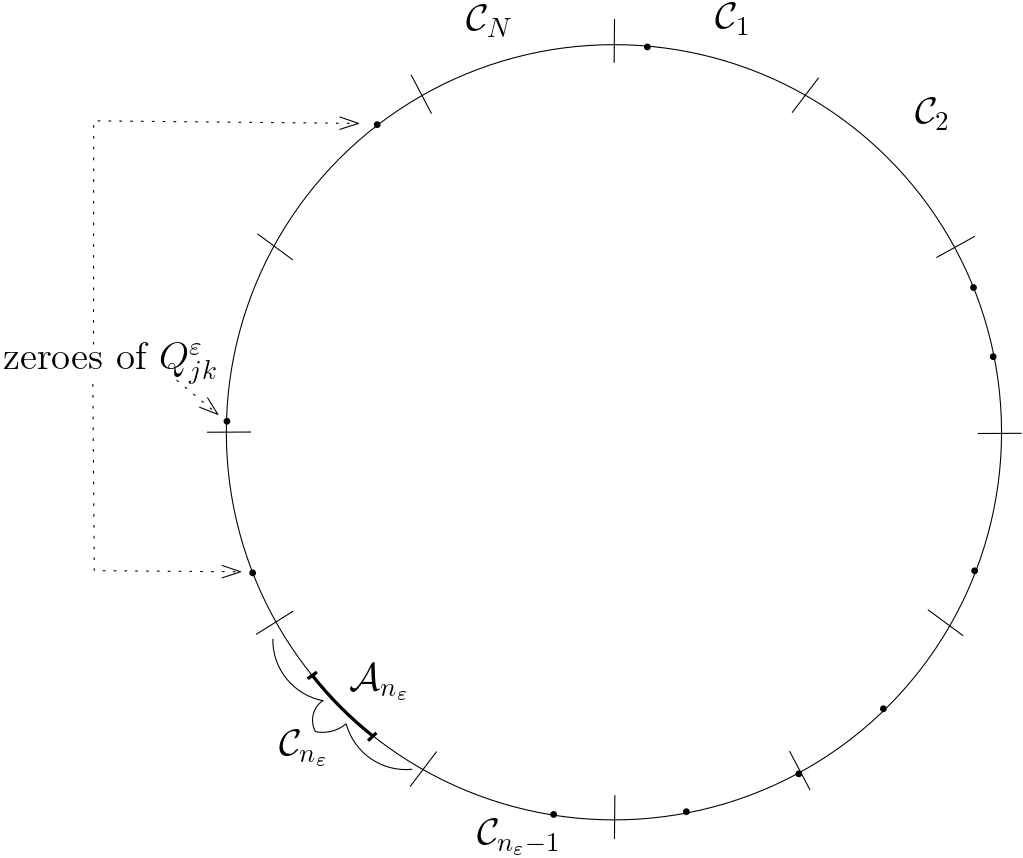}
    \caption{Construction of the arc $\mathcal{A}_{n_{\eps}}$}
    \label{fig:middlethird}
\end{figure}

We next claim that for every $\eps>0$ there exists $\nu_\eps>0$ large enough such that the restriction of $V^\eps$ to the dilated arc $\nu_\eps \mathcal{A}_{n_{\eps}}$ coincides with the restriction to the same arc of one of the polynomials $P_1^\eps,\dots,P_l^\eps$. 
By definition of the function $V^\eps$ and taking into account its  continuity, it is enough to prove that, for every $\eps>0$ there exists $\nu_\eps>0$ large enough such that,  in the interior of the arc $\nu_\eps \mathcal{A}_{n_{\eps}}$, one has for every $j,k\in  \{1,\dots,l\}$ that either $P_j^\eps\equiv P_k^\eps$ or $P_j^\eps-P_k^\eps$ is never vanishing. To see that, it is enough to prove that if $Q_{jk}^\eps$ does not vanish on $\mathcal{A}_{n_{\eps}}$ then $P_j^\eps-P_k^\eps$ does not vanish on $\nu_\eps \mathcal{A}_{n_{\eps}}$ for $\nu_\eps>0$ large enough independent of $j,k$. In that case, one has, for $\nu>0$ large enough, $x\in S^1$ and $j,k\in  \{1,\dots,l\}$, 
\[(P_j^\eps-P_k^\eps)(\nu x) = \nu^{d'} \left(Q_{jk}^\eps(x) +o(1) \right),\]
where $d'$ is the positive degree of $Q_{jk}^\eps$ and $o(1)$ is a function of $x$ and $\nu$ tending to $0$ as $\nu$ tends to infinity uniformly with respect to $x\in S^1$ and $j,k\in  \{1,\dots,l\}$. This concludes the proof of the claim. 

Since the arcs $\mathcal{A}_1,\dots,\mathcal{A}_{N}$ 
do not depend on $\eps$, 
there exist one of them, denoted by ${\mathcal{A}}$, 
and sequences
$(\eps_m)_{m\in\mathbb{N}}, (\nu_m)_{m\in\mathbb{N}}$ in $(0,+\infty)$,
and $(V_m)_{m\in\mathbb{N}}$ in $\mathcal{P}^2_d$ such that $\lim_{m\to\infty}\eps_m=0$
and $V^{\eps_m}=V_m$ on $\nu {\mathcal{A}}$ for every $m\in\N$ and every $\nu\ge \nu_m$.
Let $\hat V_m\in \mathcal{P}^2_d$ be the homogeneous term of maximal degree of $V_m$.
Notice that
\[\hat V_m(x)=\lim_{\nu \to +\infty}\nu^{-d_m} V_m(\nu x),\qquad \forall x\in \R^2,\]
where $d_m$ denotes the degree of $V_m$.
Up to normalizing $V^{\eps_m}$,  we may assume that the maximum of the moduli of the coefficients of the polynomial $\hat V_j$ is equal to $1$. Thus, up to extracting a subsequence, $\hat V_m$ converges uniformly on compact sets to some nonzero 
 $\bar V\in\mathcal{P}^2_d$.

Similarly to the proof of Theorem~\ref{no-universal}, we can construct a periodic trajectory $t\mapsto  \Phi_{ A^0}(t,0)\bar x$ 
starting at $\bar x$ in the interior of the arc $ {\mathcal{A}}$, 
with $ A^0(\cdot)$ piecewise constant taking values in $\M^0$. 
Consider the switching laws $ A^\eps(\cdot)= A^0(\cdot)-\eps\Id_2$  taking  values in $\M^\eps$.
For every $t\ge 0$ such that  $\Phi_{ A^0}(t,0)\bar{x} \in{\mathcal{A}}$ and for every $m\in\N$, we have 
\begin{align*}
\hat V_{m}(\Phi_{ A^{\varepsilon_m}}(t,0)\bar x)
&=\lim_{\nu\to +\infty}\nu^{-d_m}V_{m}(\nu \Phi_{A^{\varepsilon_m}}(t,0)\bar x)\\
&=\lim_{\nu\to +\infty}\nu^{-d_m}V^{\eps_m}(\Phi_{ A^{\varepsilon_m}}(t,0)\nu \bar x)\\
&\le \lim_{\nu\to +\infty}\nu^{-d_m}V^{\eps_m}(\nu \bar x)\\
&=\lim_{\nu\to +\infty}\nu^{-d_m}V_{m}(\nu \bar x)=\hat V_{m}(\bar x),
\end{align*}
and therefore 
\begin{align*}
\bar V(\Phi_{A^0}(t,0)\bar x)&=\lim_{m\to\infty}\hat V_{m}(\Phi_{ A^{\varepsilon_m}}(t,0)\bar x)\\
&\le
\lim_{m\to\infty}\hat V_{m}(\bar x) = \bar V(\bar x).
\end{align*}
We then deduce that $t\mapsto \bar V(\Phi_{ A^0}(t,0)\bar{x}_0)$ is constant on $\{t\ge 0\mid \Phi_{A^0}(t,0)\bar{x}_0\in {\mathcal{A}}\}$. 
Moreover $\Phi_{A^0}(t,0) = e^{t M^0_i}$ for some $i=1,2$, for $t$ small enough, and by repeating the argument in the proof of Theorem~\ref{no-universal}  we obtain that $\bar V\equiv 0$, yielding a contradiction.
\end{proof}
\begin{remark}
At the light of the previous results, one may wonder if it is possible to identify universal classes of Lyapunov functions defined with a finite number of parameters.
The results proved in this section 
show that this is not the case for linear spaces of analytic functions and families of piecewise polynomials. 
In~\cite{bcm} an explicit construction of a universal class depending only on six parameters has been provided in the special case of two-dimensional linear switched systems with two modes. Unfortunately, since such a construction is based on an explicit characterization of the stability properties of the switched system in terms of the matrix coefficients, it seems unlikely that  it can be adjusted to higher dimensional switched systems.
\end{remark}

\bibliographystyle{abbrv}
\bibliography{biblio-switch}

\def\cprime{$'$}
\begin{thebibliography}{10}

\bibitem{bacciotti-ceragioli}
A.~Bacciotti and F.~Ceragioli.
\newblock Stability and stabilization of discontinuous systems and nonsmooth
  {L}yapunov functions.
\newblock {\em ESAIM Control Optim. Calc. Var.}, 4:361--376, 1999.

\bibitem{sw-balde2}
M.~Balde, U.~Boscain, and P.~Mason.
\newblock A note on stability conditions for planar switched systems.
\newblock {\em Internat. J. Control}, 82(10):1882--1888, 2009.

\bibitem{blanchini1999set}
F.~Blanchini.
\newblock Set invariance in control.
\newblock {\em Automatica}, 35(11):1747--1767, 1999.

\bibitem{blanchini}
F.~Blanchini and S.~Miani.
\newblock A new class of universal {L}yapunov functions for the control of
  uncertain linear systems.
\newblock {\em IEEE Trans. Automat. Control}, 44(3):641--647, 1999.

\bibitem{BressanPiccoli}
A.~Bressan and B.~Piccoli.
\newblock {\em Introduction to the mathematical theory of control}, volume~2 of
  {\em AIMS Series on Applied Mathematics}.
\newblock American Institute of Mathematical Sciences (AIMS), Springfield, MO,
  2007.

\bibitem{ChesiGarulli}
G.~Chesi, A.~Garulli, A.~Tesi, and A.~Vicino.
\newblock {\em Homogeneous polynomial forms for robustness analysis of
  uncertain systems}, volume 390 of {\em Lecture Notes in Control and
  Information Sciences}.
\newblock Springer-Verlag, Berlin, 2009.

\bibitem{chitour-gaye-mason}
Y.~Chitour, M.~Gaye, and P.~Mason.
\newblock Geometric and asymptotic properties associated with linear switched
  systems.
\newblock {\em Journal of Differential Equations}, 259(11):5582--5616, 2015.

\bibitem{DM}
W.~P. Dayawansa and C.~F. Martin.
\newblock A converse {L}yapunov theorem for a class of dynamical systems which
  undergo switching.
\newblock {\em IEEE Trans. Automat. Control}, 44(4):751--760, 1999.

\bibitem{hafstein}
S.~F. Hafstein.
\newblock {\em An algorithm for constructing {L}yapunov functions}, volume~8 of
  {\em Electronic Journal of Differential Equations. Monograph}.
\newblock Texas State University--San Marcos, Department of Mathematics, San
  Marcos, TX, 2007.

\bibitem{hante}
F.~M. Hante and M.~Sigalotti.
\newblock Converse {L}yapunov theorems for switched systems in {B}anach and
  {H}ilbert spaces.
\newblock {\em SIAM J. Control Optim.}, 49(2):752--770, 2011.

\bibitem{jungers}
R.~Jungers.
\newblock {\em The joint spectral radius. Theory and applications}, volume 385
  of {\em Lecture Notes in Control and Information Sciences}.
\newblock Springer-Verlag, Berlin, 2009.

\bibitem{kurzweil}
J.~Kurzweil.
\newblock On the inversion of {L}yapunov's second theorem on stability of
  motion.
\newblock {\em American Mathematical Society Translations}, 24(2):19--77, 1956.

\bibitem{liberzon-book}
D.~Liberzon.
\newblock {\em Switching in systems and control}.
\newblock Systems \& Control: Foundations \& Applications. Birkh\"auser Boston
  Inc., Boston, MA, 2003.

\bibitem{sontag-converse}
Y.~Lin, E.~D. Sontag, and Y.~Wang.
\newblock A smooth converse {L}yapunov theorem for robust stability.
\newblock {\em SIAM Journal on Control and Optimization}, 34(1):124--160, 1996.

\bibitem{bcm}
P.~Mason, U.~Boscain, and Y.~Chitour.
\newblock Common polynomial {L}yapunov functions for linear switched systems.
\newblock {\em SIAM J. Control Optimization}, 45(1):226--245, 2006.

\bibitem{massera}
J.~L. Massera.
\newblock Contributions to stability theory.
\newblock {\em Annals of Mathematics}, pages 182--206, 1956.

\bibitem{meilakhs1979design}
A.~Meilakhs.
\newblock Design of stable control systems subject to parametric perturbation.
\newblock {\em Automation and Remote Control}, 39(10):1409--1418, 1979.

\bibitem{molch1}
A.~P. Molchanov and E.~S. Pyatnitski{\u\i}.
\newblock Lyapunov functions that define necessary and sufficient conditions
  for absolute stability of nonlinear nonstationary control systems. {I}.
\newblock {\em Automat. Remote Control}, 47(3):343--354, 1986.

\bibitem{molch2}
A.~P. Molchanov and E.~S. Pyatnitski{\u\i}.
\newblock Lyapunov functions that define necessary and sufficient conditions
  for absolute stability of nonlinear nonstationary control systems. {II}.
\newblock {\em Automat. Remote Control}, 47(4):443--451, 1986.

\bibitem{molch-scl}
A.~P. Molchanov and Y.~S. Pyatnitskiy.
\newblock Criteria of asymptotic stability of differential and difference
  inclusions encountered in control theory.
\newblock {\em Systems Control Lett.}, 13(1):59--64, 1989.

\bibitem{rockafellar}
R.~Rockafellar.
\newblock {\em Convex Analysis}.
\newblock Princeton Landmarks in Mathematics and Physics. Princeton University
  Press, 1970.

\bibitem{wilson}
F.~W. Wilson.
\newblock Smoothing derivatives of functions and applications.
\newblock {\em Transactions of the American Mathematical Society},
  139:413--428, 1969.

\end{thebibliography}

 \end{document}